\begin{document}

\author{SAMANTHA C. MOORE}
\address{Department of Mathematics, University of North Carolina at Chapel Hill, 3250 Phillips Hall \#3\\
Chapel Hill, 27599, United States of America\\
scasya@live.unc.edu}
\title{ON THE ZEROES OF HALF-INTEGRAL WEIGHT EISENSTEIN SERIES ON $\Gamma_0(4)$}

\maketitle
\begin{history}
\received{(14 January 2017)}
\accepted{(18 July 2017)}
\end{history}

\markboth{Samantha C. Moore}
{On the Zeroes of Half-Integral Weight Eisenstein Series on $\Gamma_0(4)$}

\begin{abstract}
We locate all but $O(\sqrt{k\log{k}})$ zeroes of the half integral weight Eisenstein series $E_\infty(z,k)$ of $\Gamma_0(4)$ for $k$ sufficiently large. To do this, we relate $E_\infty(z,k)$ to $\Gamma_0(4)$'s other Eisenstein series, $E_0(z,k)$ and $E_\frac{1}{2}(z,k)$, which are easier to study in the regions of which zeroes occur. We will use trigonometric approximations of $E_0(z,k)$ and $E_\frac{1}{2}(z,k)$ in order to locate the zeroes.
\end{abstract}

\keywords{Eisenstein series; zeroes; $\Gamma_0(4)$.}

\ccode{Mathematics Subject Classification 2010: 11Mxx}

\section{Overview}\label{sec:Intro}
Let $k\geq 5$ be an odd integer. In this paper we study the zeroes of the weight  $\frac{k}{2}$ Eisenstein series, $E_\infty(z)$, of $\Gamma_0(4)$, which is defined as 
\begin{equation} E_\infty(z)=e^{\frac{\pi ik}{4}}\sum\limits_{\substack{(d,2c)=1\\ c>0}}\frac{G\big(\frac{-d}{4c}\big)^k}{(4cz+d)^{\frac{k}{2}}},\label{eq:EinfDef}\end{equation}
where $G\big(\frac{m}{n}\big)=\frac{g(\frac{m}{n})}{|g(\frac{m}{n})|}$ and $g(\frac{m}{n})=\sum\limits_{a\mod n} e^\frac{2\pi ia^2m}{n}$ is the Gauss sum [2]. Note that, though $E_\infty(z)$ depends on $k$ as well as $z$, we omit this dependence from the notation.

This Eisenstein series is interesting to study due to its weight; many authors have studied the zeroes of integral weight modular forms, including [5] and [7]. On the other hand, there have been studies of half integral weight modular forms (such as that of [1]), but none of these have been on the zeroes of their Eisenstein series. As the simplest half integral weight modular form, $E_\infty(z)$ is thus a natural choice to study.

Before stating our results, we first state some basic properties of $\Gamma_0(4)$ and its Eisenstein series. Many of these properties can be found in [2] and [4]. 

In order to study the zeroes of $E_\infty$ on all of $\mathbb{H}$, it suffices to find the zeroes of $E_\infty(z)$ in a fundamental domain. Our choice of fundamental domain, $F_\infty$,  of $\Gamma_0(4)$ is the union of six regions [3], as shown in Fig. \ref{fig:Finf}.

\begin{figure}[h]\vspace{-2cm}
\centering \includegraphics[scale=.35]{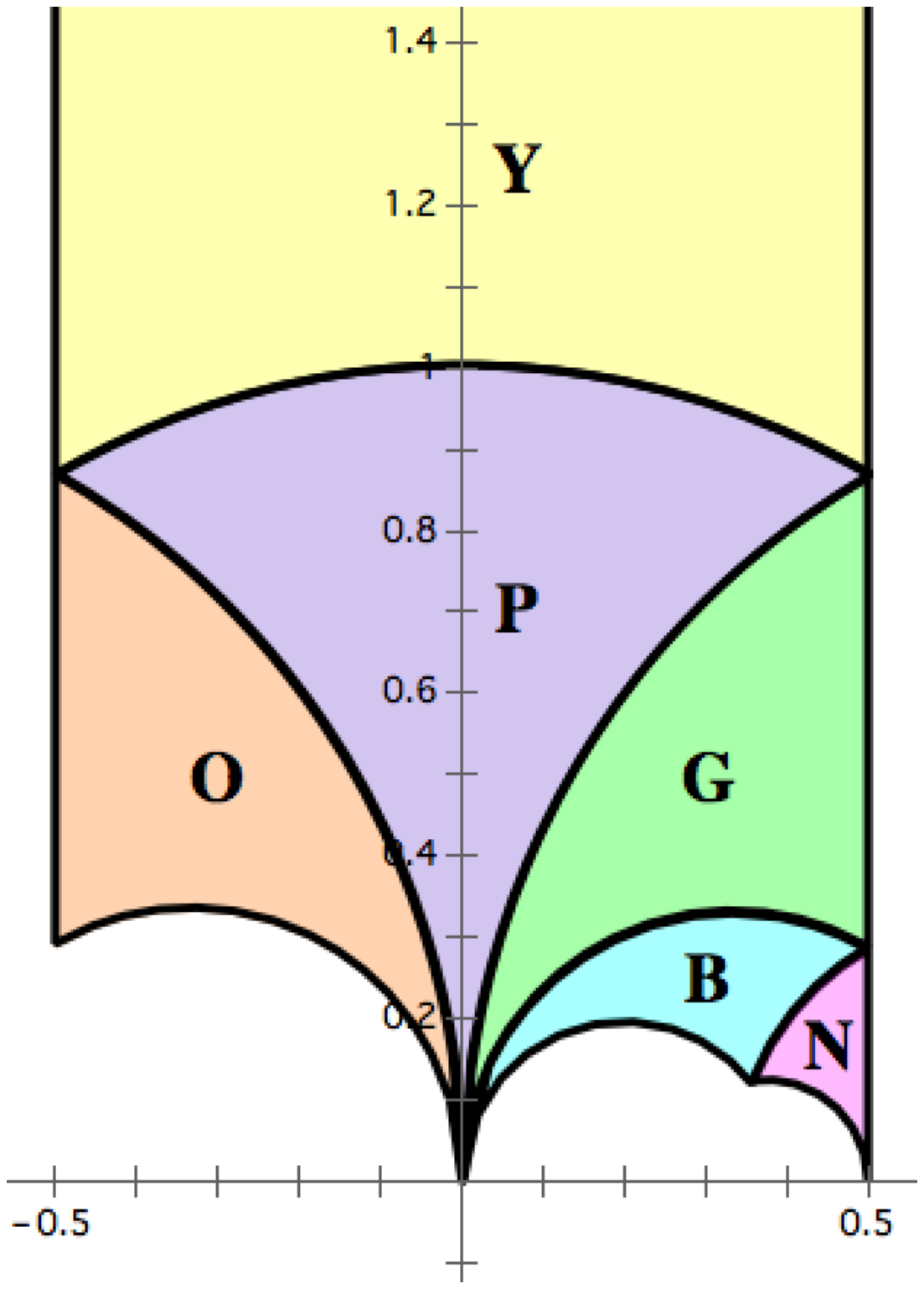}\vspace{-2cm}
\caption{$F_{\infty}$\label{fig:Finf}}
\end{figure}

There are three cusps of $F_\infty$: $0, \frac{1}{2},$ and $\infty$. We have already seen the Eisenstein series associated to the cusp $\infty$. The other two Eisenstein series (see [2]) are defined as

\begin{equation}E_0(z)=\sum_{\substack{(u,2v)=1\\ u>0}}^{ } \frac{(\frac{-v}{u}) \epsilon_u^k}{(uz+v)^{\frac{k}{2}}}, \hspace{1cm} E_\frac{1}{2}(z)=e^\frac{-\pi ik}{4}\sum_{\substack{(2c,d)=1\\ d>0}}\frac{G(\frac{d-2c}{8d})^k}{(dz+c)^{\frac{k}{2}}}.
\label{eq:EzeroAndEonehalfDef}\end{equation}

Here $(\frac{a}{b})$ is the Jacobi symbol and 
\[
  \epsilon_u= \left\{\def\arraystretch{1.2}%
  \begin{array}{@{}c@{\quad}l@{}}
   1, & u\equiv1\mod4\\
	i, & u\equiv3\mod4.\\
  \end{array}\right. 
\]
We will also find it useful to use the Fourier expansion of $E_0(z)$ (see [4]), which is 
\begin{equation} E_0(z)=2^{\frac{k}{2}}\sum\limits_{l=1}^{\infty}b_\ell q^\ell\label{eq:EzeroFourier}\end{equation}
where $q=e^{2\pi iz}$, and 
\begin{equation} b_\ell=\frac{\pi^{\frac{k}{2}}\ell^{\frac{k}{2}-1}}{\Gamma(\frac{k}{2})e^{\frac{\pi ik}{4}}}\sum\limits_{n>0\  odd }\epsilon_n^k n^{-\frac{k}{2}}\sum\limits_{j=0}^{n-1} \Big(\frac{j}{n}\Big)e^{-\frac{2\pi i\ell j}{n}}.\label{eq:blDef}\end{equation}

The following relations exist between  $E_\infty(z)$ and  $E_0(z)$ and  $E_\frac{1}{2}(z)$ [2]:
\begin{equation}E_0\Big(-\frac{1}{4z}\Big)=(4z)^{\frac{k}{2}}i^{-k}E_\infty(z),\label{eq:EzeroEinfRelation}\end{equation}
\begin{equation}E_{\frac{1}{2}}\Big(z+\frac{1}{2}\Big)=2^{\frac{k}{2}}(2z+1)^{-\frac{k}{2}}E_\infty\Big(\frac{z}{2z+1}\Big).\label{eq:EonehalfEinfRelation}\end{equation}

These allow us to study zeroes of $E_\infty(z)$ near the cusps $0$ and $\frac{1}{2}$ by studying zeroes of $E_0(z)$ and  $E_\frac{1}{2}(z)$ in regions with $\text{Im}(z)\geq\frac{1}{2}$. We study  $E_0(z)$ on $F_0$ and $E_\frac{1}{2}(z)$ on $F_\frac{1}{2}$, where $F_0$ and  $F_\frac{1}{2}$ are the relevant fundamental domains that correspond to $F_\infty$. These are shown in Fig. \ref{fig:FzeroAndFonehalf}.

\begin{figure}[ph]\vspace{-2.1cm}
\begin{center}
\includegraphics[scale=.35]{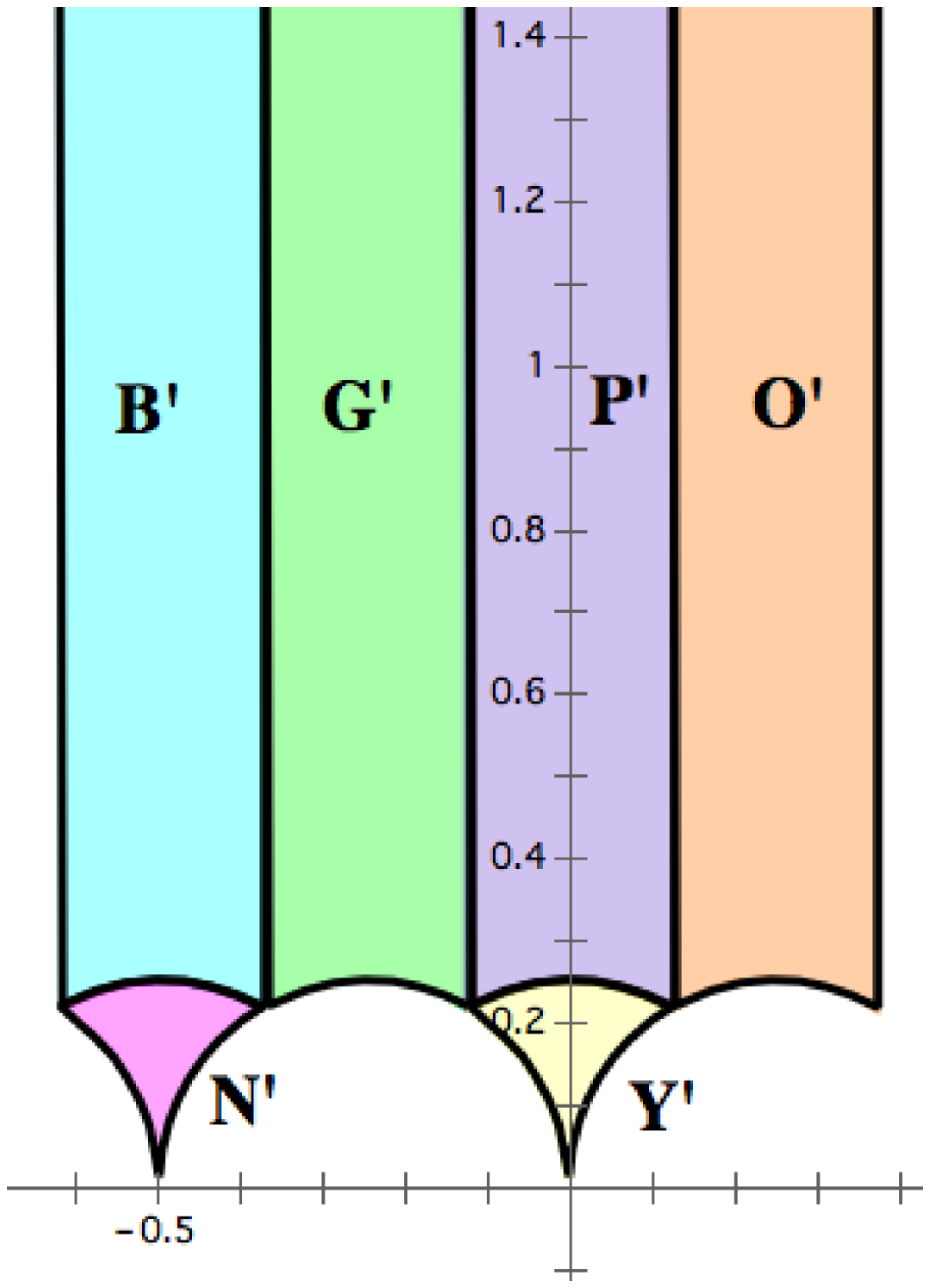}\hspace*{-2cm}\includegraphics[scale=.345]{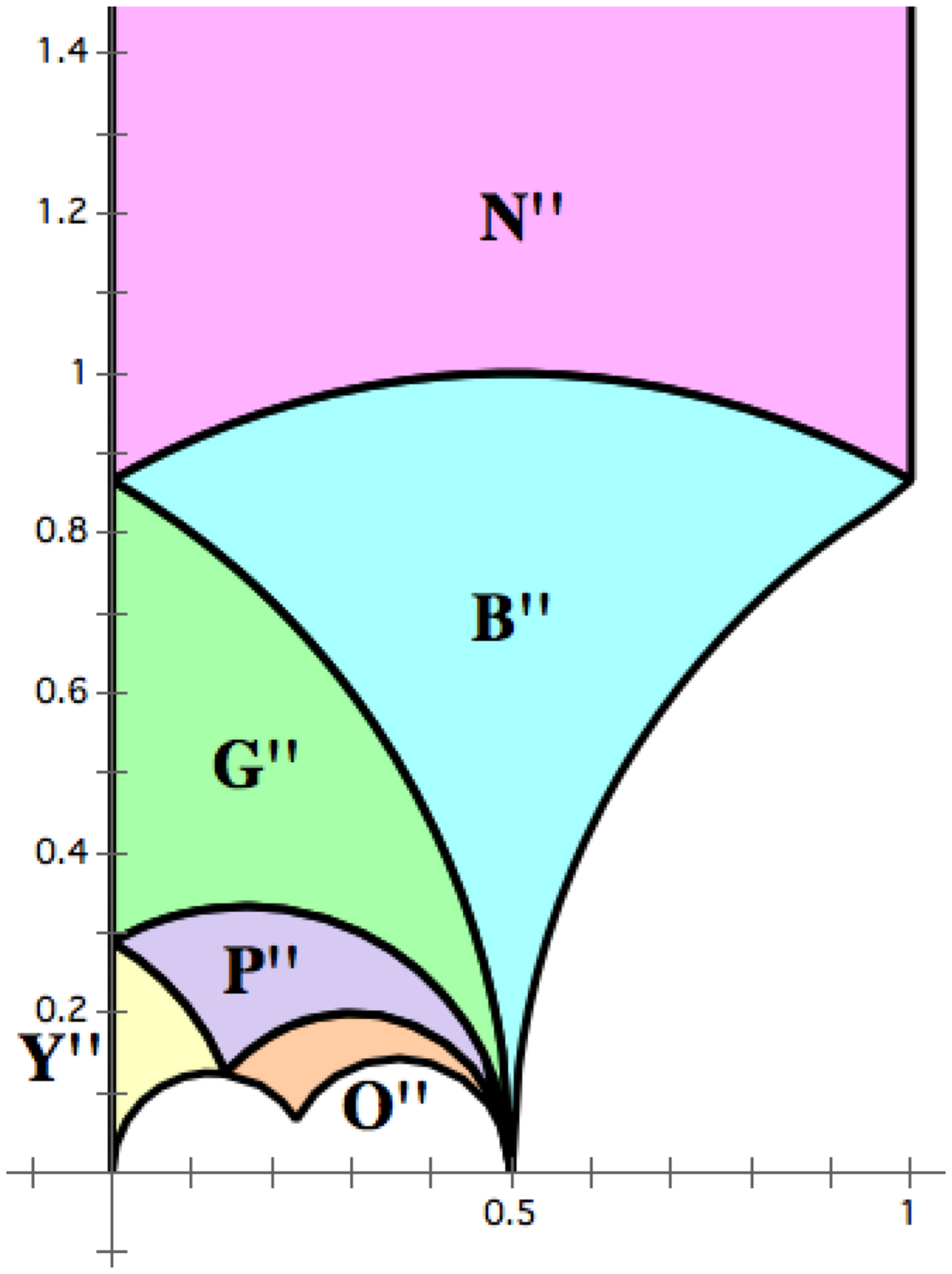}\end{center}\vspace{-2cm}
\caption{$F_0$ (left) and $F_\frac{1}{2}$ (right)}\label{fig:FzeroAndFonehalf}
\end{figure}

 Based on the studies by [5] and [7], one might guess that the zeroes of $E_\infty(z)$ would lie along the boundaries of the fundamental domains. In our case, we have the following conjecture:

 \begin{conjecture}
 All but one of the zeroes of $E_\infty(z)$ in $F_\infty$ lie on the circle $|z-\tfrac{1}{4}|= \tfrac{1}{4}$, which is within the regions $N$ and $B$. The final zero lies in $P$ along the line $x=0$.
 \label{conj}
 \end{conjecture}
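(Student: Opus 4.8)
The plan is to run a Rankin--Swinnerton-Dyer type argument, but pushed to an \emph{exact} count so that no zero is lost. First I would apply the (classical) valence formula on $\Gamma_0(4)$ to the weight $k/2$ form $E_\infty$, using that $\Gamma_0(4)$ has index $6$, no elliptic points, and three cusps, to obtain the precise total number $Z=Z(k)$ of zeroes of $E_\infty$ in $F_\infty$ (an explicit expression of size $\tfrac{k}{4}+O(1)$, with the boundary and cusp contributions accounted for exactly). The strategy is then to exhibit exactly $Z-1$ zeroes on the arc $|z-\tfrac14|=\tfrac14$ and exactly one zero on $x=0$ in $P$; once these loci are shown to carry at least $Z$ zeroes in total, the valence count forces equality and hence rules out zeroes anywhere else, which is precisely the content of Conjecture~\ref{conj}.

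On the arc I would transport the problem to $E_0$ and $E_{1/2}$ using (\ref{eq:EzeroEinfRelation}) and (\ref{eq:EonehalfEinfRelation}). A short computation shows that the part of the circle lying in $N$ is carried by $w=-\tfrac{1}{4z}$ onto the vertical segment $\operatorname{Re}(w)=-\tfrac12$, $\operatorname{Im}(w)\ge\tfrac12$, and similarly the part in $B$ is carried onto $\operatorname{Re}(w)=\tfrac12$. Writing $w=-\tfrac12+it$, $r=|w|$, $\alpha=\arg w$, the two smallest terms of the defining series (\ref{eq:EzeroAndEonehalfDef}) for $E_0$, namely $(u,v)=(1,0)$ and $(1,1)$, combine into
\begin{equation}
E_0(w)=2\,r^{-k/2}e^{-\pi i k/4}\cos\!\Bigl(\tfrac{k}{2}\bigl(\alpha-\tfrac{\pi}{2}\bigr)\Bigr)+\mathrm{Err}(w),
\label{eq:mainapprox}
\end{equation}
since the ratio of these two terms is unimodular on the line and sweeps out an angle $\sim k$. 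As $t$ runs over the segment corresponding to $N$, the argument $\tfrac{k}{2}(\alpha-\tfrac\pi2)$ traverses an interval of length $\sim\tfrac{k\pi}{8}$, so the cosine changes sign $\approx k/8$ times; I would record this count exactly as $N_0$, and obtain the analogous exact count $N_{1/2}$ from $E_{1/2}$ on $B$. Each genuine sign change produces a zero provided $|\mathrm{Err}(w)|<2\,r^{-k/2}$ at the relevant points, and conversely the same bound, together with the argument principle on thin rectangles straddling the line, shows that no \emph{other} zeroes of $E_\infty$ hide near the arc.

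For the exceptional zero I would work with $E_\infty$ directly on $x=0$. The reflection $z\mapsto-\bar z$ across the imaginary axis, combined with the conjugation identity $\overline{G(-d/4c)}=G(d/4c)$ for the normalized Gauss sums, yields $E_\infty(-\bar z)=\mu\,\overline{E_\infty(z)}$ for a fixed unimodular $\mu=\mu(k)$; hence $\mu^{-1/2}E_\infty(iy)$ is real for all $y>0$, and its zeroes on $x=0$ are honest sign changes of a real function of one variable. The segment of $x=0$ inside $P$ joins the cusps $0$ and $\infty$; at $\infty$ the series tends to its nonzero constant term, while at the $0$ end the transformation (\ref{eq:EzeroEinfRelation}) pins down the sign via $E_0$. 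A dominant-term analysis along $x=0$ (where $q=e^{-2\pi y}$ is real and the $q$-expansion converges geometrically) would show this real function is strictly monotone away from a single interior crossing, giving exactly one zero there; the remaining three regions of $F_\infty$, and the absence of zeroes in the interior of $P$, follow from the reflection symmetries relating them to $N$, $B$ and the line $x=0$. Summing, $N_0+N_{1/2}+1=Z$ would complete the proof.

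The main obstacle is this last, exact, bookkeeping: the uniform error bound $|\mathrm{Err}(w)|<2r^{-k/2}$ must hold \emph{all the way to the endpoints} of the two segments, i.e.\ into the cusp neighborhoods $t\to\infty$ and up to the junction where $N$, $B$ and $P$ meet near $z=\tfrac14+\tfrac{i}{4}$. These transition zones are exactly where the weaker theorem concedes $O(\sqrt{k\log k})$ zeroes, because there the main cosine is near an extremum of small modulus, so a sign change can be masked by the error, or the two-term approximation ceases to dominate. To close the gap I would (i) replace (\ref{eq:mainapprox}) near the endpoints by a longer explicit truncation of the defining series with fully explicit constants, exploiting the geometric decay of the subdominant $|uw+v|^{-k/2}$; (ii) verify the finitely many ambiguous sample points, and the small-$k$ base cases, by rigorous explicit estimates; and (iii) reconcile the integer $Z$ from the valence formula with the integer $N_0+N_{1/2}+1$ by tracking the endpoints of the cosine's phase interval and the boundary identifications between regions, since a single off-by-one there is the entire difference between the proven theorem and the full Conjecture~\ref{conj}.
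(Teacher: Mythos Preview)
The paper does not prove Conjecture~\ref{conj}; it is stated as an open problem. The paper's actual result is Theorem~\ref{EzeroThm} (and Corollary~\ref{corol}), which locates only $\tfrac{k}{4}-O(\sqrt{k\log k})$ of the $\lfloor k/4\rfloor$ zeroes on the circle and explicitly does not address the zero in $P$. So there is no ``paper's own proof'' to compare against.

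Your proposal is a plan rather than a proof, and you yourself identify the genuine gap at the end: the Rankin--Swinnerton-Dyer sign-change argument you outline for the arc is exactly what the paper carries out in Sections~\ref{sec:Ezero}--\ref{sec:Sample}, and it breaks down precisely where you say it does. The error bounds $|J_1|=o(1)$, $|J_2|\ll(8/81)^{k/4}$ in Proposition~\ref{lem:EzeroApproxes} are only established for $y\le\sqrt{k/(2\log k)}$; as $y\to\infty$ the ratio $r^2/(\tfrac94+y^2)$ tends to $1$ and the tail $J_1$ is no longer dominated by the main cosine. This is not a matter of sloppy constants---the two leading terms of the $(uz+v)$-expansion genuinely cease to dominate the third and fourth near the cusp, so ``longer explicit truncation with fully explicit constants'' does not by itself close the gap; one needs a different expansion there (the paper points to the Fourier-expansion and hybrid methods of [7] as the likely route, and calls the details ``quite intricate''). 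Your item (ii), verifying finitely many ambiguous sample points, is also not finite: the number of sample points in the transition zone grows like $\sqrt{k\log k}$, and the conjecture is for all $k$.

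Your treatment of the single zero on $x=0$ is likewise only a sketch. The reflection identity you invoke is plausible, but the claim that the resulting real function on $x=0$ is ``strictly monotone away from a single interior crossing'' is exactly the substance to be proved, and the paper offers nothing toward it. In short, your outline reproduces the paper's partial result on the arc and correctly names the obstacles to the full conjecture, but does not overcome them.
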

 
It is worth noting that, though these zeroes are not on the boundaries of $F_\infty$,  it is possible to choose a fundamental domain in which this is the case (see [1]). Also, a more technical note: in Conjecture \ref{conj} and throughout the paper, we use the term zeroes to refer specifically to the non--trivial zeroes; here, the trivial zeroes are those at the cusps $0$ and $\frac{1}{2}$.

This paper will not address the zero in $P$. To study the zeroes in the regions $N$ and $B$, we study the corresponding zeroes of $E_0(z)$ and $E_\frac{1}{2}(z)$. More specifically, the zeroes in these regions lie on the line $x=-\frac{1}{2}$ in $N'$ and $B'$ and the line $x=\frac{1}{2}$ in $N''$ and $B''$, which correspond to the circle $|z-\tfrac{1}{4}|= \tfrac{1}{4}$ in $F_\infty$. For example, Fig. \ref{fig:K15zeroes} shows the zeroes in each fundamental domain when $k=15$. The only zero not seen in any of the pictures is the zero in $P$; this is because in each of the pictures it is too low or to high to be seen. For instance, the zero is located at $\approx5.881i$ in $F_0$.
\begin{figure}[!h]\vspace{-2cm}
\begin{center} \hspace*{-1.3cm}\includegraphics[scale=.32]{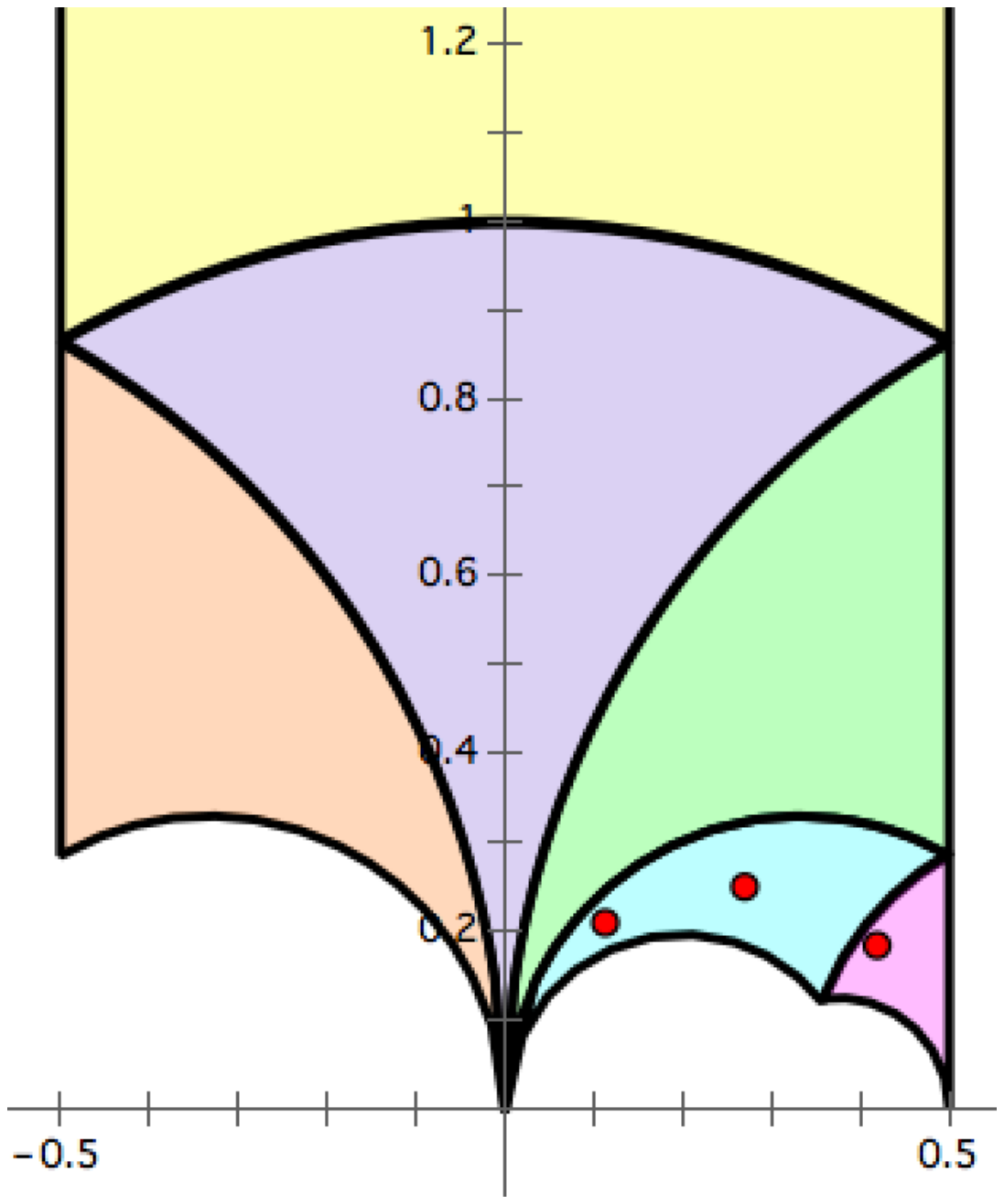}\hspace*{-2.8cm}\includegraphics[scale=.32]{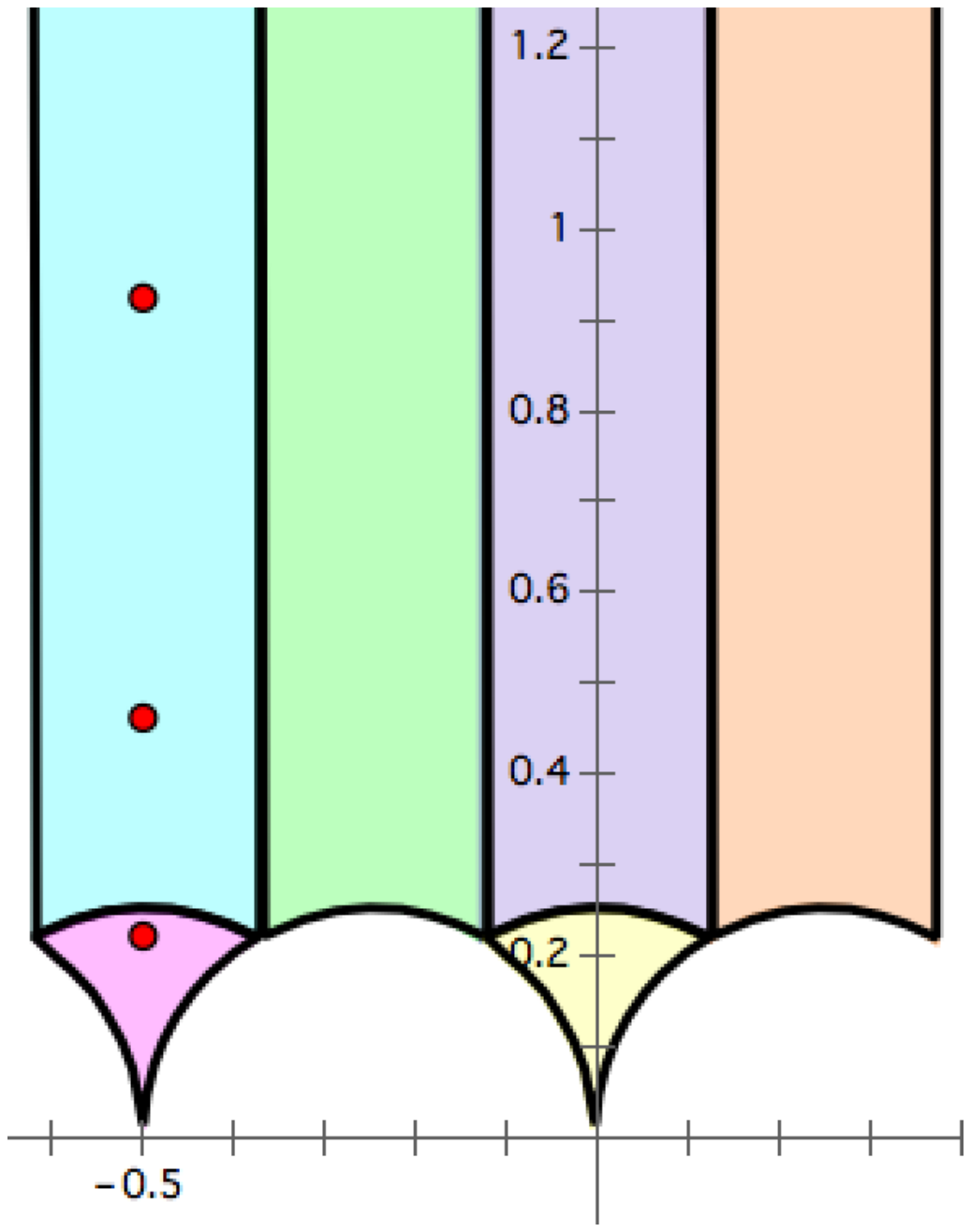}\hspace*{-2.8cm} \includegraphics[scale=.32]{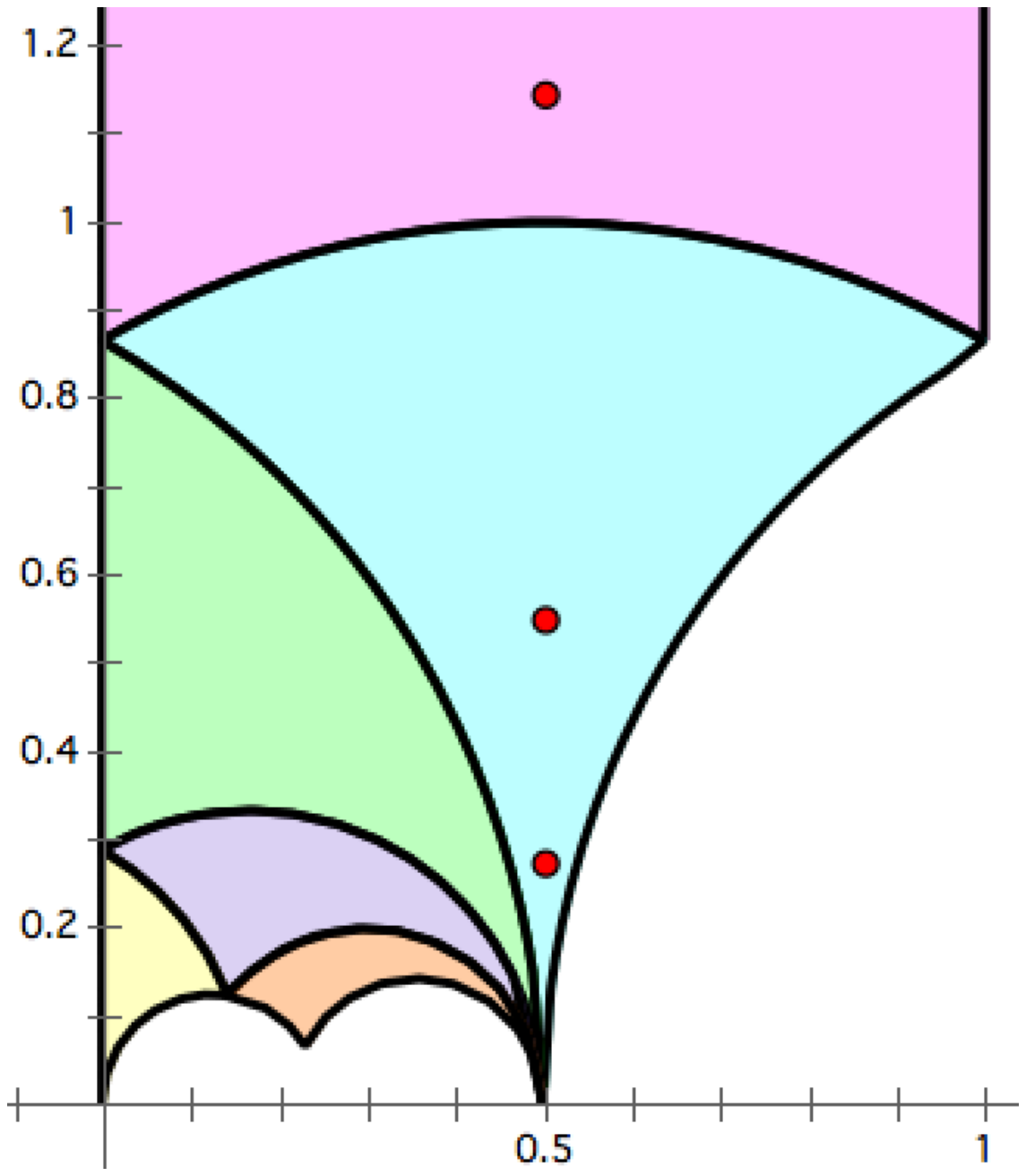}\end{center}\vspace{-2cm}
\caption{Location of zeroes for $k=15$}
\label{fig:K15zeroes}
\end{figure}

As such, in the following sections we consider $E_ 0(-\tfrac{1}{2}+iy)$ and $E_ \frac{1}{2}(\tfrac{1}{2}+iy)$. In order to restrict our studies to regions with not--too--small imaginary part, we utilize the following additional automorphy relations:
\begin{equation}E_{\frac{1}{2}}(z)=i^{-k}E_{\frac{1}{2}}(z+1),\label{eq:EonehalfEonehalfRelation}\end{equation}
\begin{equation}E_{\frac{1}{2}}(z)=(2z+1)^{-\frac{k}{2}}E_0\Big(\frac{z}{2z+1}\Big).\label{eq:EzeroEonehalfRelation}\end{equation}
 
By relation \eqref{eq:EzeroEonehalfRelation} and the periodicity of $E_0(z)$, $E_\frac{1}{2}(\tfrac{1}{2}+iy)$ where $y\geq\frac{1}{2}$ corresponds to $E_0(-\frac{1}{2}+iy')$ where $y'\leq\frac{1}{2}$. As such, it suffices to study only $y\geq\frac{1}{2}$ in each case so as to count each zero only once. 

\begin{theorem}
For $k$ sufficiently large, $E_0(-\tfrac{1}{2}+iy)$  and $E_\frac{1}{2}(\tfrac{1}{2}+iy)$ each have at least $\frac{k}{8}-O(\sqrt{k\log{k}})$ zeroes for $y\geq\frac{1}{2}$. 
\label{EzeroThm}
\end{theorem}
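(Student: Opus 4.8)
The plan is to adapt the method of Rankin and Swinnerton-Dyer. I will treat $E_0(-\tfrac12+iy)$ in detail; the argument for $E_\frac12(\tfrac12+iy)$ runs parallel. The first step is to record that
\[
\widetilde E_0(y):=e^{\pi ik/4}E_0(-\tfrac12+iy)
\]
is \emph{real} for $y>0$: reindexing $v\mapsto u-v$ in \eqref{eq:EzeroAndEonehalfDef} and using the identity $(\tfrac vu)\epsilon_u^{-k}=(\tfrac{-v}{u})\epsilon_u^{k}$, valid for all odd $u$ and all odd $k$, one gets $\overline{E_0(-\tfrac12+iy)}=e^{\pi ik/2}E_0(-\tfrac12+iy)$; alternatively this falls out of \eqref{eq:EzeroFourier}--\eqref{eq:blDef} via the classical evaluation of the quadratic Gauss sums in $b_\ell$. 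One checks similarly that $E_\frac12(\tfrac12+iy)$ is itself real. This reality is what allows a sign change to be promoted to an honest zero.

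\textbf{Isolating the main term.}
Next I would write $z=-\tfrac12+iy$, $\rho=\rho(y):=\sqrt{\tfrac14+y^2}$ and $\alpha=\alpha(y):=\arctan(2y)\in[\tfrac\pi4,\tfrac\pi2)$, so that $\tfrac12+iy=\rho e^{i\alpha}$ and $-\tfrac12+iy=\rho e^{i(\pi-\alpha)}$. For $y\ge\tfrac12$ the only pairs in the index set of \eqref{eq:EzeroAndEonehalfDef} attaining the minimal value $|uz+v|=\rho$ are $(u,v)=(1,0)$ and $(1,1)$, whose combined contribution is
\[
\frac{1}{(-\frac12+iy)^{k/2}}+\frac{1}{(\frac12+iy)^{k/2}}
=\rho^{-k/2}\big(e^{-ik\alpha/2}+(-i)^k e^{ik\alpha/2}\big)
=2\rho^{-k/2}e^{i\sigma_k}\cos\!\big(\tfrac k2\arctan(2y)+\delta_k\big),
\]
with $\sigma_k,\delta_k\in\{\pm\tfrac\pi4\}$ depending only on $k\bmod 4$. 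Multiplying by $e^{\pi ik/4}$ makes the prefactor real, so
\[
\widetilde E_0(y)=\pm\,2\rho(y)^{-k/2}\cos\theta(y)+\widetilde{\mathcal E}(y),\qquad \theta(y):=\tfrac k2\arctan(2y)+\delta_k,
\]
with $\widetilde{\mathcal E}$ real and $\theta$ strictly increasing, $\theta'(y)=k/(1+4y^2)$. The same computation, using the evaluations $G(\tfrac18)=e^{i\pi/4}$ and $G(\tfrac38)=e^{3i\pi/4}$ in the defining sum of $E_\frac12$, gives $E_\frac12(\tfrac12+iy)=2\rho^{-k/2}\cos\!\big(\tfrac k2\arctan(2y)\big)+\mathcal E_{\frac12}(y)$ with $\mathcal E_{\frac12}$ real. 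Over $y\in[\tfrac12,Y]$ the argument $\theta$ increases by $\tfrac k2\big(\arctan(2Y)-\tfrac\pi4\big)=\tfrac{\pi k}{8}-\tfrac k2\arctan\tfrac{1}{2Y}$.

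\textbf{The error and the choice of the cut-off $Y$.}
I would bound the omitted terms of \eqref{eq:EzeroAndEonehalfDef} by their absolute values, noting that the next smallest values of $|uz+v|^2$ are $\tfrac94+y^2$ (from $u=1$, $v\in\{-1,2\}$) and at least $9y^2$ (from $u\ge 3$); on $[\tfrac12,Y]$ these give a geometric-type sum of ratio tending to $0$, so $|\widetilde{\mathcal E}(y)|\ll(\tfrac94+y^2)^{-k/4}$ together with a term $\ll(2^{1/4}/\sqrt3)^{k}$ that is negligible. Hence the relative error obeys
\[
\frac{|\widetilde{\mathcal E}(y)|}{2\rho(y)^{-k/2}}\ \ll\ \Big(\frac{1/4+y^2}{9/4+y^2}\Big)^{k/4}\ \le\ \Big(1-\frac{2}{9/4+Y^2}\Big)^{k/4}.
\]
This is the heart of the matter, and the main obstacle: $Y$ must be large enough that no more than $O(\sqrt{k\log k})$ of the roughly $k/8$ zeroes on $[\tfrac12,\infty)$ are lost past $Y$, i.e.\ $\tfrac k2\arctan\tfrac{1}{2Y}=O(\sqrt{k\log k})$, which forces $Y$ to be at least of order $\sqrt{k/\log k}$; yet $Y$ must also be small enough to kill the tail, which needs $Y=o(\sqrt k)$. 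The choice $Y=\sqrt{k/\log k}$ does both: the displayed quantity becomes $\ll e^{-(\log k)/2}=k^{-1/2}\to 0$, while the count lost beyond $Y$ is $\ll\sqrt{k\log k}$. Establishing the tail bound \emph{uniformly} for $y\le Y$ at this critical size is the delicate step.

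\textbf{Counting zeroes.}
Finally, for $k$ large the relative error is $<1$ throughout $[\tfrac12,Y]$, so at each $y$ with $\theta(y)\in\pi\mathbb Z$ the value $\widetilde E_0(y)$ has the same sign as $\pm\cos\theta(y)$, and these signs alternate; hence consecutive such points bracket a zero of $\widetilde E_0$, equivalently of $E_0(-\tfrac12+iy)$. The number of such points in $[\tfrac12,Y]$ is at least
\[
\tfrac1\pi\Big(\tfrac{\pi k}{8}-\tfrac k2\arctan\tfrac{1}{2Y}\Big)-1\ \ge\ \tfrac k8-\tfrac{k}{4\pi Y}-1\ =\ \tfrac k8-O\!\big(\sqrt{k\log k}\big),
\]
which yields at least that many zeroes, and the identical argument applies verbatim to $E_\frac12(\tfrac12+iy)$. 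The secondary technical point, already flagged, is the exact reality of $e^{\pi ik/4}E_0(-\tfrac12+iy)$ (and of $E_\frac12(\tfrac12+iy)$), without which sign changes of the approximating cosine could not be converted into genuine zeroes.
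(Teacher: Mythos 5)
Your proposal follows essentially the same route as the paper: extract the two terms with $|uz+v|$ minimal to get a cosine main term $2\cos(\tfrac{k}{2}\arctan(2y)+\delta_k)$, bound the remaining terms on $\tfrac12\le y\le Y$ with $Y\asymp\sqrt{k/\log k}$, and convert sign changes at the sample points where the cosine is $\pm1$ into zeroes via the intermediate value theorem, with the reality of $e^{\pi ik/4}E_0(-\tfrac12+iy)$ and $E_{\frac12}(\tfrac12+iy)$ as the enabling fact (the paper proves this from the Fourier expansion and Koblitz's coefficient formulas rather than your $v\mapsto u-v$ symmetry, but both work). The one caution is that your stated relative error bound $\ll\big(\tfrac{1/4+y^2}{9/4+y^2}\big)^{k/4}$ silently drops a polynomial-in-$y$ factor from summing over $v$; done carefully the integral comparison gives an extra factor $O(y)$, which your cutoff $Y=\sqrt{k/\log k}$ just barely absorbs, and the paper's slightly smaller cutoff $\sqrt{k/(2\log k)}$ gives more room while still yielding the same $\tfrac k8-O(\sqrt{k\log k})$ count.
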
 
Proving this theorem will be the focus of the rest of the paper. The following corollary relates this theorem to zeroes of $E_\infty(z)$ and makes considerable progress toward proving Conjecture \ref{conj}.
\begin{corollary}
For $k$ sufficiently large, all but at most $O(\sqrt{k\log{k}})$ zeroes of $E_\infty(z)$ lie on the circle $|z-\tfrac{1}{4}|= \tfrac{1}{4}$ in $F_\infty$.
\label{corol}
\end{corollary}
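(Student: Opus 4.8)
The plan is to deduce Corollary~\ref{corol} from Theorem~\ref{EzeroThm} by carrying zeros through the automorphy relations \eqref{eq:EzeroEinfRelation} and \eqref{eq:EonehalfEinfRelation}, and then comparing with the total zero count of $E_\infty$ furnished by the valence formula. First I would set up the geometric dictionary. By \eqref{eq:EzeroEinfRelation} one has $E_\infty(z)=0$ exactly when $E_0\big(-\tfrac1{4z}\big)=0$, because $(4z)^{k/2}i^{-k}$ is nonvanishing on $\mathbb{H}$; similarly, by \eqref{eq:EonehalfEinfRelation}, $E_\infty\big(\tfrac{w-1/2}{2w}\big)=0$ exactly when $E_{\frac12}(w)=0$. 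The M\"obius map $w\mapsto-\tfrac1{4w}$ carries the vertical line $\text{Re}(w)=-\tfrac12$ (which avoids its pole $w=0$) onto the circle $|z-\tfrac14|=\tfrac14$, and it sends the half-line $\text{Re}(w)=-\tfrac12,\ \text{Im}(w)\ge\tfrac12$ onto the arc of that circle running from $\tfrac14+\tfrac i4$ to $0$; in the same way $w\mapsto\tfrac{w-1/2}{2w}$ sends the half-line $\text{Re}(w)=\tfrac12,\ \text{Im}(w)\ge\tfrac12$ onto the complementary arc running from $\tfrac14+\tfrac i4$ to $\tfrac12$. Together these two arcs form exactly the part of $|z-\tfrac14|=\tfrac14$ lying in $F_\infty$, with endpoints at the cusps $0$ and $\tfrac12$. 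Since each of these maps is a bijection of $\mathbb{H}$, a zero of $E_0$ on $\text{Re}(w)=-\tfrac12$ (respectively, of $E_{\frac12}$ on $\text{Re}(w)=\tfrac12$) with imaginary part $\ge\tfrac12$ corresponds to exactly one zero of $E_\infty$ on the associated arc; moreover, using the fundamental domains $F_0,F_{\frac12}$ together with the reduction to $y\ge\tfrac12$ made via \eqref{eq:EonehalfEonehalfRelation}--\eqref{eq:EzeroEonehalfRelation}, these are distinct non-trivial zeros of $E_\infty$, each counted once. Hence Theorem~\ref{EzeroThm} yields at least $\big(\tfrac k8-O(\sqrt{k\log k})\big)+\big(\tfrac k8-O(\sqrt{k\log k})\big)=\tfrac k4-O(\sqrt{k\log k})$ zeros of $E_\infty$ on $|z-\tfrac14|=\tfrac14$.

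It then remains to bound the total number of non-trivial zeros of $E_\infty$ in $F_\infty$, for which I would invoke the valence formula on $\Gamma_0(4)$. Since $\Gamma_0(4)$ has index $6$ in $\mathrm{SL}_2(\mathbb{Z})$ and has no elliptic points, a nonzero modular form of weight $\tfrac k2$ on $\Gamma_0(4)$ has precisely $\tfrac{(k/2)\cdot 6}{12}=\tfrac k4$ zeros in $F_\infty$, counted with multiplicity and including the cusps. Now $E_\infty$ is normalized to be nonzero at $\infty$, and from the fact that the Fourier expansions of $E_0$ and $E_{\frac12}$ at $\infty$ have leading term of bounded order (for $E_0$ this is \eqref{eq:EzeroFourier}), one reads off via \eqref{eq:EzeroEinfRelation}--\eqref{eq:EonehalfEinfRelation} that $E_\infty$ vanishes to an order independent of $k$ at the cusps $0$ and $\tfrac12$. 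Therefore $E_\infty$ has $\tfrac k4-O(1)$ non-trivial zeros in $F_\infty$, so at most $\big(\tfrac k4-O(1)\big)-\big(\tfrac k4-O(\sqrt{k\log k})\big)=O(\sqrt{k\log k})$ of them can fail to lie on $|z-\tfrac14|=\tfrac14$, which is the statement of the corollary.

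The analytic content of all this is entirely in Theorem~\ref{EzeroThm}; the point that needs care here is the bookkeeping as one passes between the fundamental domains $F_0$, $F_{\frac12}$, $F_\infty$ and strings together the automorphy relations --- in particular, checking that restricting to $y\ge\tfrac12$ on the $E_0$ side and on the $E_{\frac12}$ side together records each zero of $E_\infty$ on this arc exactly once, and that the single overlap point $z=\tfrac14+\tfrac i4$ alters the count by at most $O(1)$. A smaller loose end is the exact order of vanishing of $E_\infty$ at the cusps $0$ and $\tfrac12$; being a constant independent of $k$, it affects only the implied constant in the error term.
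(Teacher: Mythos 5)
Your proposal is correct and takes essentially the same route as the paper: the paper's proof likewise combines the valence-formula upper bound of $\lfloor\tfrac{k}{4}\rfloor$ total zeroes with the $\tfrac{k}{8}-O(\sqrt{k\log k})$ zeroes on each of the two vertical lines from Theorem~\ref{EzeroThm}. You simply make explicit the M\"obius-map bookkeeping between $F_0$, $F_{\frac{1}{2}}$, and the circle $|z-\tfrac{1}{4}|=\tfrac{1}{4}$, and the cusp-order considerations, which the paper leaves as a brief remark.
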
 
Note that this corollary shows asymptotically that we have found all of the zeroes on the circle.
\begin{proof}
The valence formula [4] for $\Gamma_0(4)$ tells us that, at most, $E_\infty(z)$ has $\lfloor{\frac{k}{4}}\rfloor$ zeroes (here, some care must be taken to count zeroes at the cusps with appropriate weights). Thus, the corollary will follow directly from Theorem \ref{EzeroThm}.
\end{proof}

To prove Theorem \ref{EzeroThm}, we need a few simpler results. First, we prove that $e^\frac{\pi ik}{4}E_0(-\tfrac{1}{2}+iy)$ is a real--valued function. We then find a real--valued trigonometric approximation of  $e^\frac{\pi ik}{4}E_0(-\tfrac{1}{2}+iy)$, which we denote as $M_0$. We bound the error of this approximation for large $k$ and $y\leq\frac{\sqrt{k}}{\sqrt{2\log{k}}}$, . Similarly, we then prove that $E_\frac{1}{2}(\tfrac{1}{2}+iy)$ is a real--valued function with real--valued trigonometric approximation $M_\frac{1}{2}$. We again bound this error under the same restrictions for $y$ and $k$. 

In the final section, we use the intermediate value theorem to determine zeroes of $M_0$ and $M_\frac{1}{2}$. Finally, by our bounds on the error of $M_0$ and $M_\frac{1}{2}$ in comparison to $E_0(-\tfrac{1}{2}+iy)$ and  $E_\frac{1}{2}(\tfrac{1}{2}+iy)$, we prove that each of the zeroes found in the last step correspond to a zero of $E_0(-\tfrac{1}{2}+iy)$ and  $E_\frac{1}{2}(\tfrac{1}{2}+iy)$.

\section{Previous Results}\label{sec:Background}
This paper follows methods from [5], which we now summarize. Rankin and Swinnerton-Dyer showed that all the zeroes of the even weight $k$ Eisentstein Series $E_k(z)$ of $SL_2(\mathbb{Z})$ lie on the bottom arc of the standard fundamental domain for $SL_2(\mathbb{Z})\backslash \mathbb{H}$. Let $k=12n+s$ where $s\in\{0,4,6,8,10,14\}$ and

\begin{equation}F_k(\theta)=e^{\frac{ik\theta}{2}}E_k(e^{i\theta})=\frac{1}{2}\sum\limits_{(c,d)=1}(ce^{\frac{i\theta}{2}}+de^{-\frac{i\theta}{2}})^{-k}.\label{eq:RSD}\end{equation}

Note that $F_k(\theta)$ converges and is real--valued for $0<\theta<\pi$. Pulling out the largest terms (those with $c^2+d^2=1$), they simplify $F_k(\theta)$ to $F_k(\theta)=2\cos{(\frac{k\theta}{2})}+R_1$, where $R_1$ is the sum of the remaining terms. Rankin and Swinnerton-Dyer show that $|R_1|<2$. Let $\theta_m=\frac{2m\pi}{k}$ where $m$ is an integer such that $\frac{k}{4}\leq m\leq\frac{k}{3}$. As the sign of $F_k(\theta_m)$ changes with each increase of $m$, the intermediate value theorem can be applied to show a zero exists between each value of $m$. Using the restrictions on $m$ and $\theta_m$, $n$ zeroes are found. By the valence formula for $E_k$, it is known that there are at most $n$ zeroes. Thus, all of the zeroes have been accounted for. 

Reitzes, Vulakh, and Young [7] further expanded on these methods. They studied the zeroes of the weight $k+\ell$ cusp forms $\Delta_{k,\ell}=E_kE_\ell-E_{k+\ell}.$ In this case, all of the zeroes lie on the boundaries of the standard fundamental domain (including the vertical boundaries, unlike in [5]). They begin by using the same method as [5], however they find that their approximations are valid for only $y\ll \sqrt{\frac{k}{\log{k}}}$, which is the same limitation that we encounter in this paper. For simplicity, they choose to use this approximation for $y\leq k^{\frac{2}{5}}$. As some zeroes lie above this $y$ value, they must develop two other techniques. The first uses the Fourier expansion, rather than the $cz+d$ expansion, to approximate $\Delta_{k,\ell}$ for  $y\geq k^{\frac{3}{5}}$. The second technique approximates $\Delta_{k,\ell}$ for $k^{\frac{2}{5}}\leq y\leq k^{\frac{3}{5}}$ by including more terms in their original $cz+d$ approximation. 

Though we do not use the techniques from [7], we note them as a possibility for future research; it seems likely that these methods could be used to study larger $y$ values in order to prove Conjecture \ref{conj} in full, however, the details become quite intricate.

\section{Fundamental Domains}\label{sec:FundDomains}
In this section we determine fundamental domains for $E_\infty$, $E_0$, and $E_\frac{1}{2}$.

The fundamental domain, $F_\infty$,  of $\Gamma_0(4)$ is the union of six regions [3], as shown in Fig. \ref{fig:Finf}. These regions are defined as follows:
	\begin{center}
	$Y=\{z\in\mathbb{H}: |\text{Re}(z)| \leq\frac{1}{2}, |z|\geq 1\},$\\
	$P=\{z\in\mathbb{H}: |z|<1, |z+1|\geq 1, |z-1|\geq 1\},$\\
	$O=\{z\in\mathbb{H}:  \text{Re}(z) \geq -\frac{1}{2}, |z+1|<1, |z+\frac{1}{3}|\geq\frac{1}{3}\},$\\
	$G=\{z\in\mathbb{H}: \text{Re}(z) \leq\frac{1}{2}, |z-\frac{1}{3}|\geq\frac{1}{3}, |z-1|< 1\},$\\
	$B=\{z\in\mathbb{H}:  |z-\frac{1}{3}|<\frac{1}{3}, |z-\frac{1}{5}|\geq\frac{1}{5}, |z-\frac{2}{3}|\geq\frac{1}{3}\},$\\
	$N=\{z\in\mathbb{H}: \text{Re}(z) \leq\frac{1}{2}, |z-\frac{2}{3}|<\frac{1}{3}, |z-\frac{3}{8}|\geq\frac{1}{8}\}.$\\
	\end{center}
Note that $F_\infty=Y\cup P\cup O\cup G\cup B\cup N$ is not quite a true fundamental domain, as it is possible for two distinct points in $F_\infty$ to be $\Gamma_0(4)$-equivalent if and only if both points lie on the outer boundaries. However, for our purposes this definition is sufficient.

Let $S(z)=-\frac{1}{z}$ and $T(z)=z+1$. The boundaries of $F_ 0$, the fundamental domain that corresponds to $F_\infty$ for $E_ 0(z)$, (shown in Fig. \ref{fig:FzeroAndFonehalf}) can be found via $z\mapsto \tfrac{1}{4}S(z)$. Similarly, the boundaries of  $F_ \frac{1}{2}$, the fundamental domain that corresponds to $F_\infty$ for $E_ \frac{1}{2}(z)$, (also shown in Fig. \ref{fig:FzeroAndFonehalf}) can be found via $z\mapsto -ST^{2}S(z)+\frac{1}{2}$.

\section{Approximations of $E_0$} \label{sec:Ezero}

\subsection{Properties of $e^\frac{\pi ik}{4}E_0(-\tfrac{1}{2}+iy)$}\label{sub:Rzero}
\begin{proposition}
The function $e^\frac{\pi ik}{4}E_0(-\tfrac{1}{2}+iy)$ is real--valued. 
\label{prop:Rzero}
\end{proposition}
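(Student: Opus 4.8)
The plan is to show that the complex conjugate of $e^{\pi i k/4}E_0(-\tfrac{1}{2}+iy)$ equals itself by manipulating the defining Dirichlet series in \eqref{eq:EzeroAndEonehalfDef}. Writing $z=-\tfrac12+iy$, the summand is $(\tfrac{-v}{u})\epsilon_u^k(u(-\tfrac12+iy)+v)^{-k/2}=(\tfrac{-v}{u})\epsilon_u^k(uiy+(v-\tfrac{u}{2}))^{-k/2}$, and the sum runs over $u>0$ odd and $v$ coprime to $2u$. First I would conjugate term by term: since $u$ is odd, $\overline{\epsilon_u^k}=\epsilon_u^{-k}$, and $\overline{(uiy+v-\tfrac u2)^{-k/2}}=(-uiy+v-\tfrac u2)^{-k/2}$ up to a careful branch-of-the-power bookkeeping factor. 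The key step is to pair the index $v$ with $-v+u$ (equivalently, exploit that $v\mapsto u-v$ is a bijection on the residues coprime to $2u$): this sends $v-\tfrac u2$ to $u-v-\tfrac u2=-(v-\tfrac u2)$, which converts $-uiy+v-\tfrac u2$ back into $-(uiy+v-\tfrac u2)$, so that after the substitution the conjugated term matches the original term times $(-1)^{-k/2}$ times a Jacobi-symbol and $\epsilon_u$ factor.

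Concretely, I expect the identity to come down to checking that the accumulated phase is trivial. The three sources of phase are: (i) the explicit $e^{\pi i k/4}$ versus its conjugate $e^{-\pi i k/4}$, contributing a factor $e^{-\pi i k/2}=(-i)^k$; (ii) $\epsilon_u^k\mapsto\epsilon_u^{-k}$ under conjugation, and then $\epsilon_u^{-k}$ compared with $\epsilon_u^{k}$ after re-indexing, using $\epsilon_u^2=\left(\tfrac{-1}{u}\right)$; (iii) the branch factor from $(-1)^{-k/2}=e^{-\pi i k/2}$ when rewriting $(-w)^{-k/2}$ in terms of $(w)^{-k/2}$ with $w=uiy+v-\tfrac u2$ lying in the right half-plane (so the principal branch is unambiguous). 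I would also need the reciprocity fact that $\left(\tfrac{-v}{u}\right)$ relates to $\left(\tfrac{-(u-v)}{u}\right)=\left(\tfrac{v}{u}\right)=\left(\tfrac{-1}{u}\right)\left(\tfrac{-v}{u}\right)$. Collecting these, the product of all phase contributions should telescope to $1$ for every odd $k$, giving $\overline{e^{\pi i k/4}E_0(-\tfrac12+iy)}=e^{\pi i k/4}E_0(-\tfrac12+iy)$.

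An alternative, possibly cleaner route is to use the automorphy relation \eqref{eq:EzeroEinfRelation} together with the reflection $z\mapsto-\bar z$. Since $-\tfrac12+iy$ maps under $z\mapsto-\tfrac{1}{4z}$ into a point on the boundary arc of $F_\infty$ fixed (as a set) by $z\mapsto-\bar z$, one can try to transfer the reality question to a symmetry statement about $E_\infty$ on that arc, where conjugation acts by a single involution of the summation index $(c,d)\mapsto(c,-d)$ or similar. I would still have to track the automorphy factor $(4z)^{k/2}i^{-k}$ and the $G\big(\tfrac{-d}{4c}\big)^k$ Gauss-sum phases under conjugation, so this does not obviously avoid the bookkeeping.

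The main obstacle will be (iii): pinning down the branch of the $-k/2$ power consistently across the conjugation and the $v\mapsto u-v$ re-indexing, since $k/2$ is a half-integer and the fractional power is only defined via a choice of branch. I expect to handle this by always keeping the base of the power in the open right half-plane $\{\operatorname{Re}>0\}$ (true because $\operatorname{Im}(u(-\tfrac12+iy)+v)=uy>0$ forces nothing, but $uiy+v-\tfrac u2$ has imaginary part $uy>0$, so it lies in the upper half-plane, not the right; I would rotate by $-i$ first, i.e.\ write it as $i(uy-i(v-\tfrac u2))$ with $uy>0$) and then using the standard principal branch, so that $\overline{w^{-k/2}}=\bar w^{-k/2}$ exactly and $(-w)^{-k/2}=e^{\mp\pi i k/2}w^{-k/2}$ with a definite sign determined by which half-plane $w$ sits in. Once that sign is fixed, the rest is the routine phase accounting sketched above.
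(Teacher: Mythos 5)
Your route is genuinely different from the paper's. The paper does not touch the $uz+v$ expansion at all here: it switches to the Fourier expansion \eqref{eq:EzeroFourier}, notes that $q^\ell=(-e^{-2\pi y})^\ell$ is real at $x=-\tfrac12$, and reduces the claim to $e^{\pi ik/4}b_\ell\in\mathbb{R}$, which it verifies using Koblitz's explicit evaluation of $b_\ell$ for squarefree $\ell$ and the real-valued Hecke-type ratios $b_\ell/b_{\ell_0}$ for the rest. Your direct-conjugation argument does work, and the phase accounting you left as ``should telescope'' indeed closes: with $w_{u,v}=(v-\tfrac u2)+iuy$ one has $\overline{w_{u,v}}=-w_{u,u-v}$, and since $\operatorname{Im}(uz+v)=uy>0$ puts every base in the upper half-plane, the principal branch gives $(-w)^{-k/2}=i^{k}w^{-k/2}$ there; meanwhile $\overline{\epsilon_u^{k}}=\epsilon_u^{-k}=\epsilon_u^{k}\bigl(\tfrac{-1}{u}\bigr)$ (using $\epsilon_u^2=(\tfrac{-1}{u})$ and $k$ odd) and $\bigl(\tfrac{-v}{u}\bigr)\bigl(\tfrac{-1}{u}\bigr)=\bigl(\tfrac{v}{u}\bigr)=\bigl(\tfrac{-(u-v)}{u}\bigr)$, so each conjugated term equals $i^{k}$ times the term at $(u,u-v)$. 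Summing (the involution $v\mapsto u-v$ preserves the index set, and the series converges absolutely for $k\ge5$) yields $\overline{E_0(-\tfrac12+iy)}=i^{k}E_0(-\tfrac12+iy)$, and the prefactor absorbs this: $\overline{e^{\pi ik/4}}\,i^{k}=e^{\pi ik/4}$. Your approach is more self-contained and elementary, at the cost of the branch bookkeeping you flagged (which is harmless here because the argument of $uz+v$ always lies in $(0,\pi)$); the paper's approach avoids branches entirely but leans on nontrivial imported formulas for the Fourier coefficients. One small correction to your sketch: the natural normalization is to keep the bases in the upper half-plane, not the right half-plane, and no rotation by $-i$ is needed once you fix the principal branch on $\mathbb{C}\setminus(-\infty,0]$.
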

\begin{proof}
To prove this, we use the Fourier expansion given in \eqref{eq:EzeroFourier} instead of the $uz+v$ expansion given in \eqref{eq:EzeroAndEonehalfDef}. Letting $z=-\frac{1}{2}+iy$,  we have $q^\ell=(-e^{-2\pi y})^\ell$, which is real--valued. Thus, we need to prove that $e^\frac{\pi ik}{4} b_\ell$ is real for all $\ell$. We do this in two cases.\\
\textbf{Case 1:} $\ell$ squarefree.\\
When $\ell$ is squarefree, [4] (see p. 189) simplifies $b_\ell$ to 

$$b_\ell=\frac{\pi^{\frac{k}{2}}\ell^{\frac{k}{2}-1}}{\Gamma(\frac{k}{2})e^{\frac{\pi ik}{4}}}\sum\limits_{\substack{n_0>0,\\  \text{odd, and}\\ \text{squarefree}}}\sum\limits_{\substack{n_1\mid \ell \\ n_1 \text{odd}}}\epsilon_{n_0n_1^2}^{k+1}(n_0n_1^2)^{-\frac{k}{2}} \Big(\frac{-\ell}{n_0}\Big)\sqrt{n_0}\mu(n_1)n_1,$$ 
where $ \mu(n_1)$ is the M\"{o}bius function. Note that $\epsilon_{n_0n_1^2}^{k+1}=\pm 1$ as $k$ is odd. Thus, $e^{\frac{\pi ik}{4}}b_\ell\in\mathbb{R}$ for all $\ell$ squarefree.\\
\textbf{Case 2:} $\ell$ not squarefree.\\
Let $\ell=p^{2v}\ell_0$ and $p^2\nmid \ell_0$. By [4] (see pp.195-199), 

\[
  \frac{b_\ell}{b_{\ell_0}} = \left\{\def\arraystretch{1.2}%
  \begin{array}{@{}c@{\quad}l@{}}
  2^{(k-2)v}, & p=2\\
	\sum\limits_{h=0}^{v} p^{h(k-2)}, & p \text{ odd prime }  p\mid \ell_0\\
		\sum\limits_{h=0}^{v} p^{h(k-2)}-\chi_{(-1)^\lambda \ell_0}(p)p^{\lambda-1}\sum\limits_{h=0}^{v} p^{h(k-2)}, & p \text{ odd prime }  p\nmid \ell_0.\\
  \end{array}\right.
\]
where $\lambda=\frac{k-1}{2}$ and $\chi_{(-1)^\lambda \ell_0}=\Big(\frac{-1}{p}\Big)^\lambda\Big(\frac{\ell_0}{p}\Big)$. Thus, $\frac{b_\ell}{b_{\ell_0}}\in\mathbb{R}$ . By iterating this process, we see that $\frac{b_\ell}{b_{\ell_*}}\in\mathbb{R}$ where $\ell_*$ is squarefree. Furthermore, $e^{\frac{\pi ik}{4}}b_\ell=Ce^{\frac{\pi ik}{4}}b_{\ell_*}$ where $C\in\mathbb{R}$.  Thus $e^{\frac{\pi ik}{4}}b_\ell$ is real--valued for all $\ell$. 
\end{proof}

Now we return to the $uz+v$ expansion of $E_0(z)$ given in Eq. \eqref{eq:EzeroAndEonehalfDef}; in the next two subsections, we aim to find a trigonometric approximation for this infinite sum that is accurate for $k$ large enough. Thus, consider the following terms of Eq. \eqref{eq:EzeroAndEonehalfDef}, which are the largest terms when $x=-\frac{1}{2}$. Letting $u=1$ and $v=0$, we get the term $\frac{1}{z^{\frac{k}{2}}}$. Letting $u=1$ and $v=1$, we get the term $\frac{1}{(z+1)^{\frac{k}{2}}}$. Though it is not obvious at this point, we want to multiply our approximation by $e^\frac{\pi ik}{4}r^\frac{k}{2}$ where $r=|-\frac{1}{2}+iy|$. Thus, let

\begin{equation} M_0(z)=e^\frac{\pi ik}{4}r^\frac{k}{2}\Big(\frac{1}{z^{\frac{k}{2}}}+\frac{1}{(z+1)^{\frac{k}{2}}}\Big).\label{eq:M0Def}\end{equation}

The following two sums include all of the terms left to be bounded. Again, we want to multiply the sums by $e^\frac{\pi ik}{4}r^\frac{k}{2}$, giving
\begin{equation}J_1=e^\frac{\pi ik}{4}r^\frac{k}{2}\sum\limits_{v\neq 0,1} \frac{(\frac{-v}{1})\epsilon_1^{k}}{(z+v)^{\frac{k}{2}}}, \hspace{2cm} J_2=e^\frac{\pi ik}{4}r^\frac{k}{2}\sum\limits_{\substack{(u,2v)=1\\ u>1}} \frac{(\frac{-v}{u})\epsilon_u^{k}}{(uz+v)^{\frac{k}{2}}}.\label{eq:J1AndJ2Def}\end{equation}

By these definitions,  $e^\frac{\pi ik}{4}r^\frac{k}{2}E_0(z)=M_0(z)+J_1+J_2$.

\begin{proposition} 
\label{lem:EzeroApproxes}
Let $\delta=\arctan{(2y)}$ and consider $\frac{1}{2}\leq y\leq\frac{\sqrt{k}}{\sqrt{2\log{k}}}$ . For $k$ large,
\[
  M_0(-\tfrac{1}{2}+iy) =  \left\{\def\arraystretch{1.2}%
  \begin{array}{@{}c@{\quad}l@{}}
   2\cos{(\frac{\delta k}{2}}-\frac{\pi}{4}), & k\equiv 1 $ mod $ 4\\
	2\cos{(\frac{\delta k}{2}}+\frac{\pi}{4}), & k\equiv 3 $ mod $ 4,\\
  \end{array}\right. 
\]
$|J_1|=o(1)$, and $|J_2|\ll(\frac{8}{81})^\frac{k}{4}$.
\end{proposition}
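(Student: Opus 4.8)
The plan is to prove the three assertions separately. The formula for $M_0$ is an exact trigonometric identity, whereas the bounds on $J_1$ and $J_2$ are term‑by‑term estimates in which the two hypotheses on $y$ play opposite roles: $y\geq\tfrac12$ keeps the bases occurring in $J_2$ bounded away from $1$, while $y\leq\tfrac{\sqrt k}{\sqrt{2\log k}}$ is what forces $J_1$ to tend to $0$.

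For the formula for $M_0$ I would first record that at $z=-\tfrac12+iy$ one has $z=re^{i(\pi-\delta)}$ and $z+1=\tfrac12+iy=re^{i\delta}$, where $r=|-\tfrac12+iy|$ and $\delta=\arctan(2y)\in(0,\tfrac\pi2)$ is the principal argument of $z+1$; in particular $|z|=|z+1|=r$, which is exactly why the common factor $r^{k/2}$ was inserted in \eqref{eq:M0Def}. Evaluating the two powers in the principal branch gives $r^{k/2}z^{-k/2}=e^{-ik\pi/2+ik\delta/2}$ and $r^{k/2}(z+1)^{-k/2}=e^{-ik\delta/2}$, so that
\[ M_0(-\tfrac12+iy)=e^{\pi ik/4}\bigl(e^{-ik\pi/2+ik\delta/2}+e^{-ik\delta/2}\bigr)=2\cos\!\Bigl(\tfrac{k\delta}{2}-\tfrac{k\pi}{4}\Bigr). \]
It remains only to reduce $\tfrac{k\pi}{4}$ modulo $2\pi$, that is, to split according to the residue of $k$ modulo $8$; this turns the right‑hand side into $2\cos(\tfrac{k\delta}{2}-\tfrac\pi4)$ when $k\equiv1\pmod4$ and into $2\cos(\tfrac{k\delta}{2}+\tfrac\pi4)$ when $k\equiv3\pmod4$, up to an overall sign (which arises for $k\equiv3,5\pmod8$ and is immaterial for locating the zeroes of $M_0$).

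For $J_1$ and $J_2$ as in \eqref{eq:J1AndJ2Def} I would apply the triangle inequality, using that every coefficient there has absolute value $1$, and write each surviving term as $\bigl(r^2/|uz+v|^2\bigr)^{k/4}$ with $r^2=\tfrac14+y^2$ and $|uz+v|^2=(v-\tfrac u2)^2+u^2y^2$. For $J_1$ (the terms with $u=1$, $v\neq0,1$), pairing $v=-m$ with $v=m+1$ gives
\[ |J_1|\;\leq\;2\sum_{m\geq1}\Bigl(\tfrac{1/4+y^2}{(m+1/2)^2+y^2}\Bigr)^{k/4}, \]
and every base is $<1$, the largest (at $m=1$) being $1-\tfrac{2}{9/4+y^2}$, whose $\tfrac k4$‑th power is at most $\exp\!\bigl(-\tfrac{k}{2(9/4+y^2)}\bigr)$; here the hypothesis $y^2\leq\tfrac{k}{2\log k}$ forces this to be $k^{-1+o(1)}$. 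I would then split the sum at $m_0\asymp\sqrt{k/\log k}$ (chosen so that $m_0^2\geq 2(\tfrac14+y^2)$): the $O(m_0)$ terms with $m\leq m_0$ are each $k^{-1+o(1)}$, contributing $O(k^{-1/2+o(1)})=o(1)$, while for $m\geq m_0$ the base is $\leq\tfrac12$, so that tail is $\ll 2^{-k/4}(\tfrac14+y^2)m_0^{-1}=o(1)$; hence $|J_1|=o(1)$. For $J_2$ (the terms with $u$ odd, $u>1$), for each fixed $u$ the base $(\tfrac14+y^2)/\bigl((v-\tfrac u2)^2+u^2y^2\bigr)$ is maximal over the admissible $v$ at $v=\tfrac{u\pm1}{2}$, the integers nearest $\tfrac u2$ (both coprime to $u$), where it equals $\tfrac{1/4+y^2}{1/4+u^2y^2}$; for $u\geq3$ and $y\geq\tfrac12$ this is bounded above by a constant strictly less than $1$, largest when $u=3$. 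Summing the geometrically decaying remaining terms for each $u$ and then over odd $u\geq3$ produces a bound of the form $|J_2|\ll c^{k/4}$ with an explicit $c<1$, namely the value recorded in the statement.

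I expect the only genuinely delicate step to be $|J_1|=o(1)$. The identity for $M_0$ is a routine computation, and the bound on $J_2$ is uniformly exponentially small and uses only $y\geq\tfrac12$; by contrast $J_1$ is merely $o(1)$, and this is exactly where the upper restriction on $y$ is indispensable — were $y$ allowed to grow like $\sqrt k$, the single term $m=1$ would already be bounded below by a positive constant. The balance that must be struck is between the size of each non‑negligible term, namely $k^{-1+o(1)}$ once $y^2\leq\tfrac{k}{2\log k}$, and the number of such terms, which is of order $\sqrt{k/\log k}$; it is this trade‑off that pins down the threshold $\sqrt k/\sqrt{2\log k}$ in the hypothesis.
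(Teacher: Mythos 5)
Your overall strategy is the same as the paper's: pull out the $u=1$, $v\in\{0,1\}$ terms, evaluate them in polar form, and estimate $J_1$, $J_2$ term by term after the triangle inequality. Two of your refinements are genuine improvements. First, your observation that the closed form for $M_0$ is really $2\cos(\tfrac{k\delta}{2}-\tfrac{k\pi}{4})$ and hence agrees with the displayed formula only up to a sign depending on $k\bmod 8$ is correct; the paper's derivation drops the factor $(-1)^{(k\mp1)/4}$ coming from $e^{\pi i k/4}$, and (as you say) this is immaterial for locating sign changes but means the proposition as literally stated is off by a sign for $k\equiv 3,5\pmod 8$. Second, your treatment of $J_1$ --- pairing $v=-m$ with $v=m+1$, bounding each of the first $m_0\asymp\sqrt{k/\log k}$ terms by the $m=1$ term $\exp(-\tfrac{k}{2(9/4+y^2)})=k^{-1+o(1)}$, and controlling the tail geometrically --- is correct and in fact cleaner than the paper's integral-comparison argument (which contains an arithmetic slip, $\exp(-\tfrac{\log k}{4})$ where the exponent should be $-\log k$; the conclusion $|J_1|=o(1)$ survives either way). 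You also correctly isolate the role of each hypothesis on $y$.

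The one genuine gap is the final constant in your $J_2$ bound, and it is internally visible in your own write-up: you identify the largest base as $\tfrac{1/4+y^2}{1/4+u^2y^2}$ at $v=\tfrac{u\pm1}{2}$, which for $u=3$, $y=\tfrac12$ equals $\tfrac15$, so the single term $(u,v)=(3,1)$ already contributes $(\tfrac15)^{k/4}$ in absolute value. Since $\tfrac15>\tfrac{8}{81}$, no term-by-term absolute-value argument can produce $|J_2|\ll(\tfrac{8}{81})^{k/4}$, and your closing assertion that the sum is $\ll c^{k/4}$ with $c$ ``the value recorded in the statement'' does not follow from what precedes it. (The constant $\tfrac{8}{81}$ in the paper traces to an exponent error in its Eq.~(37), where $|uz+v|^{k/2}$ is written as $((ux+v)^2+u^2y^2)^{k/2}$ instead of $(\cdot)^{k/4}$.) What your method honestly yields, after summing the $O(uy)$ comparable terms near $v\approx u/2$ and the decaying tails, is a bound of the shape $|J_2|\ll \sqrt{k}\,(\tfrac15)^{k/4}$ or, following the paper's max-plus-integral route with the exponent corrected, $\ll(\tfrac29)^{k/4}$ up to harmless factors. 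Either is exponentially small and is all that Theorem~\ref{EzeroThm} needs ($|J_1|+|J_2|<2$ at the sample points), so you should simply state and prove the weaker, correct exponential bound rather than claim the constant $\tfrac{8}{81}$.
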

\begin{proof}
The proof is given in the following two subsections.

\subsection{Simplifying $M_0(-\tfrac{1}{2}+iy)$}\label{Mzero}
We convert to polar notation by letting $\frac{1}{2}+iy=re^{i\delta}$ where $\delta=\arctan(2y)$. Note that this implies $-\frac{1}{2}+iy=re^{i(\pi-\delta)}$. Thus,

\begin{equation}M_0(-\tfrac{1}{2}+iy)=e^\frac{\pi ik}{4}r^\frac{k}{2}\Big(\frac{1}{(re^{i\delta})^{\frac{k}{2}}}+\frac{1}{(re^{i(\pi-\delta)})^{\frac{k}{2}}}\Big)=e^\frac{\pi ik}{4}\Big(e^{\frac{-i\delta k}{2}}+e^{\frac{-i(\pi-\delta)k}{2}}\Big).\label{eq:eq14}\end{equation}
As $e^{ix}=\cos{(x)}+i\sin{(x)}$, this becomes
\begin{equation}M_0(-\tfrac{1}{2}+iy)=e^\frac{\pi ik}{4}\big[\cos{(\tfrac{-\delta k}{2})}+i\sin{(\tfrac{-\delta k}{2})}+\cos{(\tfrac{-k\pi}{2} +\tfrac{k\delta}{2})}+i\sin{(\tfrac{-k\pi}{2} +\tfrac{k\delta}{2})}\big].\label{eq:eq15}\end{equation}

When $k\equiv 1$ mod $4$, this can be further simplified by the the even/odd properties of cosine and sine as well as angle identities $\cos{\big(x-\frac{\pi}{2}}\big)=\sin{(x)}$ and $\sin{\big(x-\frac{\pi}{2}}\big)=-\cos{(x)}$. Then apply the identity $\cos{(x)}+\sin{(x)}=\sqrt{2}\cos{\big(x-\frac{\pi}{4}}\big)$ to obtain

\begin{equation}M_0(-\tfrac{1}{2}+iy)=2\cos{(\tfrac{\delta k}{2}}-\tfrac{\pi}{4}).\label{eq:eq17}\end{equation}



Similarly, when $k\equiv 3\mod 4$, 
\begin{equation}M_0(-\tfrac{1}{2}+iy)=2\cos{(\tfrac{\delta k}{2}}+\tfrac{\pi}{4}).\label{eq:eq19}\end{equation}
Thus, 
\[
  M_0(-\tfrac{1}{2}+iy) =  \left\{\def\arraystretch{1.2}%
  \begin{array}{@{}c@{\quad}l@{}}
   2\cos{(\frac{\delta k}{2}}-\frac{\pi}{4}), & k\equiv 1 $ mod $ 4\\
	2\cos{(\frac{\delta k}{2}}+\frac{\pi}{4}), & k\equiv 3 $ mod $ 4.\\
  \end{array}\right. 
\]

Note that $M_0(-\tfrac{1}{2}+iy)$ is real--valued. This will be used in Section \ref{sec:Sample}.

\subsection{Bound on Remaining Terms of $E_0$}\label{sub:BoundEzero}
Next, we wish to show that $|J_1|=o(1)$ and $|J_2|\ll(\frac{8}{81})^\frac{k}{4}$ for $k$ large and $\frac{1}{2}\leq y\leq\frac{\sqrt{k}}{\sqrt{2\log{k}}}$.

We begin by bounding $J_1$. By the triangle inequality,

\begin{equation}|J_1|\leq\sum\limits_{v\neq 0,1} \frac{|r^{\frac{k}{2}}e^{\frac{\pi i k}{4}}(\frac{-v}{1})\epsilon_1^{k}|}{|z+v|^{\frac{k}{2}}}=\sum\limits_{v\neq 0,1} \frac{r^{\frac{k}{2}}}{|z+v|^{\frac{k}{2}}}.\label{eq:eq23}\end{equation}
When $x=-\frac{1}{2}$, $|z+v|=|z-v+1|$, giving

\begin{equation}|J_1|\leq2\sum\limits_{v\geq2} \frac{r^{\frac{k}{2}}}{|z+v|^{\frac{k}{2}}}= 2\Big[\frac{r^{\frac{k}{2}}}{|\frac{3}{2}+iy|^\frac{k}{2}} +\sum\limits_{v\geq3} \frac{r^{\frac{k}{2}}}{|z+v|^{\frac{k}{2}}}\Big],\label{eq:eq21}\end{equation}
by pulling out the term $v=2$. As $\frac{1}{|z+v|^{\frac{k}{2}}}$ is decreasing for $v\geq0$, we can bound the sum by an integral using the integral test, giving

\begin{equation}|J_1|\leq2r^{\frac{k}{2}}\Big[\frac{1}{(\frac{9}{4}+y^2)^{\frac{k}{4}}}+\int_2^\infty \frac{dt}{(t-\frac{1}{2})^2+y^2}\frac{1}{((t-\frac{1}{2})^2+y^2)^{\frac{k}{4}-1}}\Big].\label{eq:eq22}\end{equation}
Utilizing the bound on $t$ and pulling out factors that do not depend on $t$ gives

\begin{equation}|J_1|\leq2r^{\frac{k}{2}}\Big[\frac{1}{(\frac{9}{4}+y^2)^{\frac{k}{4}}}+\frac{1}{(\frac{9}{4}+y^2)^{\frac{k}{4}-1}}\int_2^\infty \frac{dt}{(t-\frac{1}{2})^2+y^2}\Big].\label{eq:eq26}\end{equation}
This integral can be bounded by an absolute constant, giving

\begin{equation}|J_1|\ll 2\Big(\frac{r^2}{\frac{9}{4}+y^2}\Big)^{\frac{k}{4}}+2r^2\Big(\frac{r^2}{\frac{9}{4}+y^2}\Big)^{\frac{k}{4}-1}.\label{eq:eq27}\end{equation}
Replacing $r$ by its definition, $r=|\frac{1}{2}+iy|=\sqrt{\tfrac{1}{4}+y^2}$, we get

\begin{equation}|J_1|\ll (\tfrac{1}{4}+y^2)\Big(\frac{\tfrac{1}{4}+y^2}{\frac{9}{4}+y^2}\Big)^{\frac{k}{4}-1},\label{eq:eq28}\end{equation}
 We now simplify the right factor, which we denote as $R$, precisely

 \begin{equation}R=\Big(\frac{\tfrac{1}{4}+y^2}{\frac{9}{4}+y^2}\Big)^{\frac{k-4}{4}}= \Big(\frac{1+\frac{1}{4y^2}}{1+\frac{9}{4y^2}}\Big)^{\frac{k-4}{4}}.\label{eq:eq29}\end{equation}
Note that $R$ is increasing for all $y\geq 0$. Thus, we only need to bound it for large $y$, specifically $y=\frac{\sqrt{k}}{\sqrt{2\log{k}}}$. To begin, we convert this to an exponential function and utilize the definition of the logarithm of a quotient. Thus, 
\begin{equation}R=\exp\Big(\frac{k-4}{4}\log\Big(\frac{1+\frac{1}{4y^2}}{1+\frac{9}{4y^2}}\Big)\Big).\label{eq:eq30}\end{equation}
Using the Taylor expansion of $\log(1+x)$ and simplifying gives

 
 \begin{equation}R=\exp\Big(-\frac{k}{2y^2}+\frac{2}{y^2}+O(\frac{k}{y^4})+O(\frac{1}{y^4})\Big)\ll\exp\Big(-\frac{k}{2y^2}+O(\frac{k}{y^4})+O(\frac{1}{y^4})\Big).\label{eq:eq32}\end{equation}
 Substitute this into \eqref{eq:eq28} and let $y\leq\frac{\sqrt{k}}{\sqrt{2\log{k}}}$ to get


 \begin{equation}|J_1|\ll\Big(\frac{1}{4}+\frac{2k}{\log{k}}\Big)\exp\Big(-\frac{\log{k}}{4}+O(\frac{k}{y^4})+O(\frac{1}{y^4})\Big).\label{eq:eq35}\end{equation}
The error terms become unnecessary for $k$ large, implying that

 \begin{equation}|J_1|\ll\Big(\frac{2k}{\log{k}}\Big)\exp\Big(-\frac{\log{k}}{4}\Big)= \Big(\frac{2k}{\log{k}}\Big)k^{-\frac{1}{4}}=o(1).\label{eq:eq36}\end{equation}
Thus $|J_1|=o(1)$ for $k$ large and $y\leq\frac{\sqrt{k}}{\sqrt{2\log{k}}}$. 

Next we bound $J_2$, which is simpler. By the triangle inequality,

 \begin{equation}|J_2|\leq\sum\limits_{(u,2v)=1, u>1} \frac{|r^{\frac{k}{2}}e^{\frac{\pi i k}{4}}(\frac{-v}{u})\epsilon_u^{k}|}{|uz+v|^{\frac{k}{2}}}\leq\sum\limits_{u\geq 3}r^{\frac{k}{2}}\sum\limits_{v\in\mathbb{Z}} \frac{1}{((v+ux)^2+u^2y^2)^{\frac{k}{2}}}.\label{eq:eq37}.\end{equation}
Consider the inner sum, which we denote as $S(v)$. Let $f(v)=\frac{1}{((v+ux)^2+u^2y^2)^{\frac{k}{2}}}$ and note that $f$ is nonnegative, initially increasing, and eventually decreasing.  By Eq. (3.6) in [7],  these properties imply that $\sum\limits_{v\in\mathbb{Z}}f(v)\leq  \max{f(v)}+\int_{-\infty}^{\infty} f(t)dt$. Thus,

 \begin{equation}S(v)\leq  f(-ux)+\int_{-\infty}^{\infty}\frac{dt}{((t+ux)^2+u^2y^2)^{\frac{k}{2}}}= \frac{1}{(u^2y^2)^{\frac{k}{2}}}+\int_{-\infty}^{\infty}\frac{dt}{(t^2+u^2y^2)^{\frac{k}{2}}},\label{eq:eq39}\end{equation}
as $+ux$ was merely a shift. Let $t=uyw$ for some $w$ and note that $dt=uydw$. This gives

 \begin{equation} S(v)\leq \frac{1}{(uy)^{k}}+\frac{uy}{(uy)^k}\int_{-\infty}^{\infty}\frac{dw}{(1+w^2)^{\frac{k}{2}}}.\label{eq:eq41}\end{equation}
 We can evaluate this integral in terms of $\Gamma-$functions using a table or Mathematica. From here, Stirling's Formula implies that this integral equals  $O(k^{\frac{-1}{2}})$. For more details, refer to integral $I_k$ on page 14 of [7]. This implies that 
 
\begin{equation}S(v) \ll \frac{1}{(uy)^{k}}+\frac{1}{(uy)^{k-1}}\frac{1}{\sqrt{k}}.\label{eq:eq42}\end{equation}
Substituting this into our outer sum, we find that

\begin{equation}|J_2|\ll\sum\limits_{u\geq 3} r^{\frac{k}{2}}\Big[\frac{1}{(uy)^{k}}+\frac{1}{(uy)^{k-1}}\frac{1}{\sqrt{k}}\Big]\ll r^{\frac{k}{2}}\Big[\frac{1}{(3y)^{k}}+\frac{1}{(3y)^{k-1}}\frac{1}{\sqrt{k}}\Big].\label{eq:eq43}\end{equation}
Simplifying gives
\begin{equation}|J_2|\ll\Big(\frac{\sqrt{\tfrac{1}{4}+y^2}}{9y^2}\Big)^\frac{k}{2}+\Big(\frac{\tfrac{1}{4}+y^2}{9y^2}\Big)^\frac{k}{2} \frac{3y}{\sqrt{k}}\ll\Big(\frac{\sqrt{\tfrac{1}{4}+y^2}}{9y^2}\Big)^\frac{k}{2}\Big(\frac{3y}{\sqrt{k}}\Big).\label{eq:eq44}\end{equation}
As the first factor of this is decreasing in y, we can bound it by the value at y=1/2. 

\begin{equation}|J_2|\ll\Big(\frac{1}{324y^4}+\frac{1}{81y^2}\Big)^\frac{k}{4}\Big(\frac{3y}{\sqrt{k}}\Big)\leq\Big(\frac{8}{81}\Big)^\frac{k}{4}\Big(\frac{3y}{\sqrt{k}}\Big),\label{eq:eq45}\end{equation}
as $y\geq\frac{1}{2}$. Assuming $y\leq o(\sqrt{k})$, this gives

\begin{equation}|J_2|\ll\Big(\frac{8}{81}\Big)^\frac{k}{4},\label{eq:eq48}\end{equation} 
as desired.
\end{proof}

Note that, as $e^{\frac{\pi ik}{4}}r^\frac{k}{2}E_0(z)=M_0+J_1+J_2$, and $e^{\frac{\pi ik}{4}}r^\frac{k}{2}E_0(z), M_0\in\mathbb{R}$, it must be the case that $(J_1+J_2)\in\mathbb{R}$. This will be used in Section \ref{sec:Sample}.

\section{Approximations of $E_\frac{1}{2}$} \label{sec:Eonehalf}
This section is similar to Section \ref{sec:Ezero}, except that we work with $E_\frac{1}{2}(\tfrac{1}{2}+iy)$ instead of $E_0(-\tfrac{1}{2}+iy)$.

\subsection{Properties of $E_\frac{1}{2}(\tfrac{1}{2}+iy)$}
\begin{proposition}
The function $E_{\frac{1}{2}}(\tfrac{1}{2}+iy)$ is real--valued. 
\end{proposition}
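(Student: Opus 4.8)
The plan is to mirror the proof of Proposition~\ref{prop:Rzero}, exploiting the automorphy relation \eqref{eq:EzeroEonehalfRelation} that ties $E_\frac{1}{2}$ to $E_0$. Setting $z = -\tfrac12 + iy$ in \eqref{eq:EzeroEonehalfRelation}, one has $\tfrac{z}{2z+1} = \tfrac{-\tfrac12+iy}{2iy} = \tfrac12 + \tfrac{i}{4y}$, so that $E_\frac12\bigl(-\tfrac12+iy\bigr) = (2iy)^{-k/2} E_0\bigl(\tfrac12 + \tfrac{i}{4y}\bigr)$. Combined with the period relation \eqref{eq:EonehalfEonehalfRelation}, which gives $E_\frac12(\tfrac12+iy) = i^{k} E_\frac12(-\tfrac12+iy)$ (shifting by $1$), we obtain a clean identity expressing $E_\frac12(\tfrac12+iy)$ in terms of $E_0$ evaluated at a point of the form $\tfrac12 + i y'$ with $y' = \tfrac{1}{4y}$. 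The task then reduces to understanding the phase of $E_0$ on the vertical line $\mathrm{Re}(z) = \tfrac12$, and tracking the explicit power-of-$i$ prefactors.

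The key steps, in order: (i) use \eqref{eq:EonehalfEonehalfRelation} and \eqref{eq:EzeroEonehalfRelation} to reduce $E_\frac12(\tfrac12+iy)$ to a prefactor times $E_0$ at a point on $\mathrm{Re}(z)=\tfrac12$; (ii) show that $E_0$ on the line $\mathrm{Re}(z)=\tfrac12$ has a controlled argument by again invoking the Fourier expansion \eqref{eq:EzeroFourier}: with $z = \tfrac12 + iy'$ one has $q^\ell = (-e^{-2\pi y'})^\ell$, again real, so one only needs $e^{\pi i k/4} b_\ell \in \mathbb{R}$, which is exactly Cases~1 and~2 of Proposition~\ref{prop:Rzero}; (iii) assemble the prefactors $i^{k}$, $(2iy)^{-k/2}$, and $e^{-\pi i k/4}$ and check they conspire to a real multiple of $e^{\pi i k/4} E_0(\tfrac12 + iy')$, which by (ii) is real. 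Careful bookkeeping of $i^{-k/2} = e^{-\pi i k/4}$ against the $e^{-\pi i k/4}$ appearing in the definition \eqref{eq:EzeroAndEonehalfDef} of $E_\frac12$ should make everything cancel to a real scalar.

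Alternatively — and this may be cleaner — one can argue directly from \eqref{eq:EonehalfEinfRelation} together with the already-established reality of $e^{\pi i k/4}E_0(-\tfrac12+iy)$ and relation \eqref{eq:EzeroEinfRelation}: relation \eqref{eq:EonehalfEinfRelation} with $z = iy$ gives $E_\frac12(\tfrac12 + iy) = 2^{k/2}(2iy+1)^{-k/2} E_\infty\bigl(\tfrac{iy}{2iy+1}\bigr)$, and \eqref{eq:EzeroEinfRelation} rewrites $E_\infty$ at that argument in terms of $E_0$ at $-\tfrac{1}{4} \cdot \tfrac{2iy+1}{iy} = -\tfrac12 + \tfrac{i}{4y}$ — again a point with real part $-\tfrac12$ — up to explicit powers. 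Then Proposition~\ref{prop:Rzero} applies directly. I would carry out whichever chain produces fewer spurious $i$'s.

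The main obstacle is the phase bookkeeping: half-integral weight forces half-integral powers of $i$ and of $2iy+1$, so one must fix branch cuts consistently and verify that the accumulated root-of-unity factors multiply to something real rather than to $\pm i$ times something real. This is the same delicacy that appears in the definitions \eqref{eq:EinfDef}--\eqref{eq:EzeroAndEonehalfDef} via the normalized Gauss sums $G(\tfrac{m}{n})$ and the $\epsilon_u$, and it is where an off-by-$e^{\pi i k/4}$ error would hide; apart from this, the argument is a routine transfer of Proposition~\ref{prop:Rzero} along the automorphy relations.
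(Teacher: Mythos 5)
Your proposal is correct and follows essentially the same route as the paper: the paper likewise combines \eqref{eq:EzeroEonehalfRelation} at $z=-\tfrac12+iy$, the periodicity of $E_0$, Proposition~\ref{prop:Rzero}, and the shift relation \eqref{eq:EonehalfEonehalfRelation}, and the accumulated phase factors do collapse to the real scalar $(-1)^k(2y)^{\frac{k}{2}}$ exactly as you anticipate. The only (cosmetic) difference is that you re-derive the reality of $e^{\pi i k/4}E_0$ on $\mathrm{Re}(z)=\tfrac12$ directly from the Fourier expansion rather than reducing to $\mathrm{Re}(z)=-\tfrac12$ by periodicity, which is the same content.
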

\begin{proof}
Recall the automorphy relation \eqref{eq:EzeroEonehalfRelation} from Section \ref{sec:Intro}. Letting $z=-\frac{1}{2}+iy$, this relation becomes
\begin{equation}E_{\frac{1}{2}}(-\tfrac{1}{2}+iy)=(2iy)^{-\frac{k}{2}}E_0(\tfrac{1}{2}+iy'),\label{eq:eq49}\end{equation}
for some $y'\in\mathbb{R}$. Utilizing the periodicity of $E_0$, the right side changes to
\begin{equation}L:=e^{\frac{\pi ik}{4}}(2iy)^{\frac{k}{2}}E_{\frac{1}{2}}(-\tfrac{1}{2}+iy)=e^{\frac{\pi ik}{4}}E_0(-\tfrac{1}{2}+iy').\label{eq:eq50}\end{equation}

From Proposition \ref{prop:Rzero}, the right side is real--valued and thus $L$ must also be. Utilizing the automorphy relation \eqref{eq:EonehalfEonehalfRelation} from Section \ref{sec:Intro} and simplifying gives
\begin{equation}L=i^{k}e^{\frac{\pi ik}{4}}(2ye^{\frac{\pi i}{2}})^{\frac{k}{2}}E_{\frac{1}{2}}(\tfrac{1}{2}+iy)=(-1)^k(2y)^{\frac{k}{2}}E_{\frac{1}{2}}(\tfrac{1}{2}+iy).\label{eq:eq51}\end{equation}

As noted earlier, $L$ is real--valued,  so $E_{\frac{1}{2}}(z)$ must real--valued when $x=\frac{1}{2}$. 
\end{proof}

In the next proposition, we aim to find a trigonometric approximation for $E_{\frac{1}{2}}(\tfrac{1}{2}+iy)$, as defined in Eq. \eqref{eq:EzeroAndEonehalfDef}, that is accurate for $k$ large enough. We use methods similar to that of Section \ref{sec:Ezero}. Thus, consider the following terms, which are the largest terms when $x=\frac{1}{2}$. Letting $c=0$ and $d=1$, we get the term $\frac{1}{z^{\frac{k}{2}}}$. Letting $c=-1$ and $d=1$, we get the term $\frac{1}{(z-1)^{\frac{k}{2}}}$. Similar to the case of $E_0(z)$, we want to multiply our approximation by $r^\frac{k}{2}$ where $r=|\frac{1}{2}+iy|$. Thus, let

\begin{equation} M_\frac{1}{2}(z)=r^\frac{k}{2}\Big(\frac{1}{z^{\frac{k}{2}}}+\frac{1}{(z-1)^{\frac{k}{2}}}\Big).\label{eq:MonehalfDef}\end{equation}

The following two sums include all of the terms left to be bounded. Again, we want to multiply the sums by $r^\frac{k}{2}$, giving

\begin{equation}N_1=\sum\limits_{c\neq 0,1} \frac{r^\frac{k}{2}e^{-\frac{\pi ik}{4}}G(\frac{1-2c}{8})^{k}}{(z+c)^{\frac{k}{2}}},\hspace{2cm} N_2=\sum\limits_{\substack{(d,2c)=1\\ d>1}} \frac{r^\frac{k}{2}e^{-\frac{\pi ik}{4}}G(\frac{d-2c}{8d})^{k}}{(dz+c)^{\frac{k}{2}}}.\label{eq:L1AndL2Def}\end{equation}

By these definitions,  $r^\frac{k}{2}E_\frac{1}{2}(z)=M_\frac{1}{2}(z)+N_1+N_2$.

\begin{proposition}
Let $\theta=\arctan{(2y)}$ and consider $\frac{1}{2}\leq y\leq\frac{\sqrt{k}}{\sqrt{2\log{k}}}$ . For $k$ large, $M_{\frac{1}{2}}(\tfrac{1}{2}+iy)=2\cos{(\tfrac{\theta k}{2})}$,
$|N_1|=o(1)$, and $|N_2|\ll(\frac{8}{81})^\frac{k}{4}$.
\label{lem:EonehalfApproxes}
\end{proposition}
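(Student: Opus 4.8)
The plan is to run the proof of Proposition~\ref{lem:EzeroApproxes} again, with $E_{\frac12}(\tfrac12+iy)$, $M_{\frac12}$, $N_1$, $N_2$ playing the roles of $E_0(-\tfrac12+iy)$, $M_0$, $J_1$, $J_2$. There are three tasks: evaluate $M_{\frac12}(\tfrac12+iy)$ in closed form, show $|N_1|=o(1)$, and show $|N_2|\ll(\tfrac{8}{81})^{k/4}$. The first two subsections of Section~\ref{sec:Eonehalf} would be organized exactly as subsections~\ref{Mzero} and~\ref{sub:BoundEzero}.

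For the closed form I would imitate Section~\ref{Mzero}: put $\tfrac12+iy=re^{i\theta}$ with $\theta=\arctan(2y)$, so that $z-1=-\tfrac12+iy=re^{i(\pi-\theta)}$ and $r=|\tfrac12+iy|$, and substitute into $M_{\frac12}$, keeping the phase $e^{-\pi ik/4}G(\tfrac38)^k=i^k$ carried by the $(z-1)^{-k/2}$ leading term (a one-line Gauss-sum computation gives $G(\tfrac38)=e^{3\pi i/4}$). The prefactor $r^{k/2}$ cancels the two moduli, leaving $e^{-ik\theta/2}+i^k e^{-ik(\pi-\theta)/2}$; since $k$ is odd, $i^k=e^{ik\pi/2}$, which exactly cancels the $e^{-ik\pi/2}$ produced by the argument $\pi-\theta$, and Euler's formula collapses this to $2\cos(\tfrac{k\theta}{2})$. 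This is cleaner than the $M_0$ computation in that there is no splitting into residue classes mod~$4$; in particular $M_{\frac12}(\tfrac12+iy)\in\mathbb{R}$, so (as in Section~\ref{sec:Ezero}, using that $E_{\frac12}(\tfrac12+iy)$ is real) $N_1+N_2\in\mathbb{R}$, a fact needed in Section~\ref{sec:Sample}.

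For $|N_1|$, apply the triangle inequality together with $|G(\cdot)|=|e^{-\pi ik/4}|=1$ to reduce to $r^{k/2}\sum_c |z+c|^{-k/2}$ over the integers $c$ not producing a leading term. On the line $x=\tfrac12$ one has $|z+c|=|z+(-1-c)|$, so the sum is symmetric; pulling out the largest surviving term (of modulus $|\tfrac32+iy|$) and applying the integral test to the tail lands on exactly the quantity $R=\big(\tfrac{1/4+y^2}{9/4+y^2}\big)^{(k-4)/4}$ analyzed in Section~\ref{sub:BoundEzero}. The same Taylor expansion of $\log(1+x)$ and the same choice $y\le\sqrt{k}/\sqrt{2\log k}$ give $R\ll k^{-1/4}$, hence $|N_1|=o(1)$. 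For $|N_2|$, the triangle inequality with $|G(\cdot)|=1$ gives $|N_2|\le r^{k/2}\sum_{d\ge3}\sum_{c\in\mathbb{Z}}|dz+c|^{-k/2}$ over odd $d>1$; bound the inner sum by $\max f+\int_{-\infty}^{\infty}f$ via Eq.~(3.6) of [7], translate to kill the shift, evaluate the resulting Gaussian-type integral in terms of $\Gamma$-functions and apply Stirling's formula (as with $I_k$ on p.~14 of [7]) to extract a factor $O(k^{-1/2})$, and bound the leading factor at $y=\tfrac12$, where $\tfrac1{324y^4}+\tfrac1{81y^2}=\tfrac{8}{81}$; together with $y=o(\sqrt k)$ this yields $|N_2|\ll(\tfrac8{81})^{k/4}$.

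The main obstacle, as for $J_1$, is the $N_1$ estimate: the window $\tfrac12\le y\le\sqrt{k}/\sqrt{2\log k}$ is precisely what forces the geometric-type ratio $R$ to be $o(1)$, and every later estimate in the paper inherits that restriction. The only genuinely new bookkeeping relative to Section~\ref{sec:Ezero} is tracking the Gauss-sum factors $G(\tfrac{d-2c}{8d})^k$: they are harmless in the modulus bounds for $N_1$ and $N_2$ since $|G(\cdot)|=1$, but the specific phase of the $(z-1)^{-k/2}$ term must be identified in order to obtain the clean cosine for $M_{\frac12}$.
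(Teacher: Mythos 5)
Your proposal is correct and follows essentially the same route as the paper, which itself only sketches this proposition by deferring to the $E_0$ computations of Section~\ref{sec:Ezero}. One point in your favor: your explicit evaluation $e^{-\pi ik/4}G(\tfrac{3}{8})^k=i^k$ is exactly what is needed for the $(z-1)^{-k/2}$ term to produce a real cosine, and it quietly corrects the paper's displayed definition \eqref{eq:MonehalfDef}, which as literally written omits this phase and would give $e^{-ik\theta/2}+i^{-k}e^{ik\theta/2}$, not $2\cos(\tfrac{k\theta}{2})$, for odd $k$.
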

\begin{proof}
Using methods similar to those in Subsection \ref{Mzero}, it can easily be shown that $M_{\frac{1}{2}}(\tfrac{1}{2}+iy)=2\cos{(\tfrac{\theta k}{2})}$. 

Using techniques similar to those in Subsection \ref{sub:BoundEzero}, the bounds on $|N_1|$ and $|N_2|$ quickly become identical to the bounds of $|J_1|$ and $|J_2|$, respectively. Thus, $|N_1|=o(1)$ and $|N_2|\ll\big(\frac{8}{81}\big)^\frac{k}{4}$.

Note that $M_{\frac{1}{2}}(\tfrac{1}{2}+iy)\in\mathbb{R}$. Furthermore, as $r^{\frac{k}{2}}E_\frac{1}{2}(z)=M_\frac{1}{2}+N_1+N_2$, and $r^{\frac{k}{2}}E_\frac{1}{2}(z), M_\frac{1}{2}\in\mathbb{R}$, it must be the case that $(N_1+N_2)\in\mathbb{R}$. These properties will be used in Section \ref{sec:Sample}.
\end{proof}





\section{Sample Points}\label{sec:Sample}
\subsection{Sample Points for $M_0(-\tfrac{1}{2}+iy)$ and $M_\frac{1}{2}(\tfrac{1}{2}+iy)$}
In this subsection, we find sample points of $M_0(-\tfrac{1}{2}+iy)$ and $M_\frac{1}{2}(\tfrac{1}{2}+iy)$ with alternating signs. This will allow us to use the intermediate value theorem, which indicates the number of zeroes we have found for each function. These zeroes are then shown to represent zeroes of $E_0(z)$ and $E_\frac{1}{2}(z)$ in the next subsection.

Recall that $ M_0(-\tfrac{1}{2}+iy)$ is a real--valued function (where $\delta=\arctan{2y}$). Note that, as $M_0(-\tfrac{1}{2}+iy)$ approximates $e^{\frac{\pi ik}{4}}E_0(-\tfrac{1}{2}+iy)$ for $\frac{1}{2}\leq y\leq\frac{\sqrt{k}}{\sqrt{2\log{k}}}$, we can restrict $\delta$ to the interval $\frac{\pi}{4}\leq\delta\leq\arctan{\big(\frac{\sqrt{2k}}{\sqrt{\log{k}}}\big)}.$ From here on, we use the notation $y_{\text{max}}=\frac{\sqrt{2k}}{\sqrt{\log{k}}}$.

We wish to find sample points of this function that have the greatest absolute value. Thus, for $k\equiv 1\mod 4$, we want $\frac{\delta k}{2}-\frac{\pi}{4}=n\pi$ for some $n\in\mathbb{N}$. Solving for $\delta$, we find $\delta=\frac{2\pi n}{k}+\frac{\pi}{2k}$. Substituting this into our interval for $\delta$ above, we get $\frac{\pi}{4}\leq\frac{2\pi n}{k}+\frac{\pi}{2k}\leq\arctan{(y_{\text{max}})}.$ Next we solve for $n$, getting 

\begin{equation}\frac{k}{8}-\frac{1}{4}\leq n\leq \frac{k}{2\pi} \arctan{(y_{\text{max}})}-\frac{1}{4}.\label{eq:eq65}\end{equation}
Let us simplify the upper bound. By the identity $\arctan{x}=\frac{\pi}{2}-\arctan{(\frac{1}{x})}$ and the Taylor expansion of $\arctan(\frac{1}{x})$ , we have 

\begin{equation}n\leq\frac{k-1}{4}-O(\frac{k}{y_{\text{max}}}).\label{eq:eq68}\end{equation}

 As the sign of $\cos{(\frac{\delta k}{2}}-\frac{\pi}{4})$ changes every time $n$ increases, this describes at least $\frac{k}{8}-O(\frac{k}{y_{\text{max}}})$ sign changes. By the intermediate value theorem, there must be a zero of $M_0(-\tfrac{1}{2}+iy)$ between each of these sign changes, so the number of zeroes that we have found is greater than or equal to $\frac{k}{8}-O(\frac{k}{y_{\text{max}}})$. We can use the same process for $k\equiv 3 \mod 4$, finding that  $\frac{k}{8}+\tfrac{1}{4}\leq n\leq \frac{k+1}{4}+O(\frac{k}{y_{\text{max}}})$. This implies  the number of zeroes that we have found is again greater than or equal to $\frac{k}{8}-O(\frac{k}{y_{\text{max}}})$. 

Next, recall that $M_{\frac{1}{2}}(\tfrac{1}{2}+iy)=2\cos{(\frac{\theta k}{2})}$ is a real--valued function that is valid for $\frac{1}{2}\leq y\leq y_{\text{max}}$. Again, by the same process we find that $\frac{k}{8}\leq n\leq\frac{k}{4}-O(\frac{k}{y_{\text{max}}})$. Again, this implies  the number of zeroes that we have found is greater than or equal to $\frac{k}{8}-O(\frac{k}{y_{\text{max}}})$. 

\subsection{Main Theorem}
We now prove Theorem \ref{EzeroThm}.

\begin{proof}
First, we prove that each zero of $M_{0}(-\tfrac{1}{2}+iy)$ corresponds to a zero of $E_{0}(-\tfrac{1}{2}+iy)$. Recall Proposition \ref{lem:EzeroApproxes}. By our bounds, $|J_1+J_2|<2=|M_0(-\tfrac{1}{2}+iy)|$ at the sample points, so each sign change of $M_0(-\tfrac{1}{2}+iy)$ found above also corresponds to a sign change of $e^{\frac{\pi i k}{4}}r^\frac{k}{2}E_0(-\tfrac{1}{2}+iy)$. Therefore, by the intermediate value theorem,we have found greater than or equal to $\frac{k}{8}-O(\frac{k}{y_{\text{max}}})$ zeroes of $E_0(-\tfrac{1}{2}+iy)$ for $k$ large. Similarly, by Proposition \ref{lem:EonehalfApproxes}, we have found greater than or equal to $\frac{k}{8}-O(\frac{k}{y_{\text{max}}})$ zeroes of $E_\frac{1}{2}(\tfrac{1}{2}+iy)$ when $k$ is large.\end{proof}

\section*{Acknowledgements}
This work was completed in summer 2016 during an REU conducted at Texas A\&M University.  The author thanks the NSF and the Department of Mathematics at Texas A\&M for supporting the REU as well as Dr. Matthew Young for his guidance.

\end{document}